\numberwithin{figure}{section} 
\newcommand{\field}[1]{\mathbb{#1}} 
\newcommand{\Z}{\field{Z}} 
\newcommand{\C}{\field{C}}
\newcommand{\Q}{\field{Q}}
\numberwithin{equation}{section}
\newtheorem{theorem}{\textbf{Theorem}}
\numberwithin{theorem}{section}
\newtheorem{corollary}[theorem]{\textbf{Corollary}}
\newtheorem{lemma}[theorem]{\textbf{Lemma}}
\newtheorem{proposition}[theorem]{\textbf{Proposition}}
\theoremstyle{definition}
\newtheorem{definition}[theorem]{Definition}
\newtheorem{remark}{Remark}[section]
\newtheorem{claim}[theorem]{Claim}
\newcommand{\bea}{\begin{eqnarray}} 
\newcommand{\eea}{\end{eqnarray}} 
\newcommand{\be}{\begin{equation}} 
\newcommand{\ee}{\end{equation}} 
\newcommand{\benn}{\begin{equation*}} 
\newcommand{\eenn}{\end{equation*}}
\title[short title]{Stringy Hirzebruch classes of Weierstrass fibrations}
\author{James Fullwood}
\author{Mark van Hoeij$\mbox{}^*$}\thanks{$\mbox{}^*$ supported by NSF grant 1618657}
\address{School of Mathematical Sciences\\Shanghai Jiao Tong University\\ 800 Dongchuan Road, Shanghai, China}
\email{fullwood@maths.hku.hk}
\address{Mathematics Department\\Florida State University\\Tallahassee, FL 32306, U.S.A.}
\email{hoeij@math.fsu.edu}
\begin{document}

\maketitle

\begin{abstract}
A Weierstrass fibration is an elliptic fibration $Y\to B$ whose total space $Y$ may be given by a global Weierstrass equation in a $\mathbb{P}^2$-bundle over $B$. In this note, we compute stringy Hirzebruch classes of singular Weierstrass fibrations associated with constructing non-Abelian gauge theories in $F$-theory. For each Weierstrass fibration $Y\to B$ we then derive a generating function $\chi^{\text{str}}_y(Y;t)$, whose degree-$d$ coefficient encodes the stringy $\chi_y$-genus of $Y\to B$ over an unspecified base of dimension $d$, solely in terms of invariants of the base. To facilitate our computations, we prove a formula for general characteristic classes of blowups along (possibly singular) complete intersections. 
\end{abstract}

\tableofcontents

\section{Introduction}\label{intro}
Let $X$ be a smooth complex variety, $\mathscr{E}\to X$ a holomorphic vector bundle, and let $\widetilde{T}_y(\mathscr{E})$ be the cohomological characteristic class given by
\[
\widetilde{T}_y(\mathscr{E})=\sum_{q\geq 0}\text{ch}(\Lambda^q\mathscr{E}^{\vee})\text{td}(\mathscr{E})y^q\in H^*(X)\otimes \Q[y],
\]
which we refer to as the \emph{Hirzebruch class} of $\mathscr{E}$. The homology class 
\[
\widetilde{T}_y(TX)\cap [X]\in H_*(X)\otimes \Q[y]
\]
will then be referred to as the Hirzebruch class of $X$, which we denote by $\widetilde{T}_y(X)$. The zero-dimensional component $\int\widetilde{T}_y(X)$ of $\widetilde{T}_y(X)$ is denoted by $\chi_y(X)$, which is referred to as \emph{Hirzebruch's} $\chi_y$-\emph{genus}. By the Hirzebruch-Riemann-Roch theorem,
\begin{equation}\label{hnr}
\chi_y(X)=\sum_{q\geq 0}\left(\sum_{i\geq 0}(-1)^i\text{dim}_{\C}H^i(\Lambda^qT^*X)\right)y^q\in \Z[y],
\end{equation}
so that $\chi_y(X)$ encodes linear relations among the Hodge numbers 
\[
h^{p,q}(X)=\text{dim}_{\C}H^p(\Lambda^qT^*X),
\]
and moreover, evaluating $\chi_y(X)$ at $y=-1,0,1$ yields the topological Euler characteristic $\chi(X)$, the arithmetic genus $\chi_a(X)$, and the signature $\sigma(X)$ respectively.

While the Hirzebruch class $\widetilde{T}_y$ encodes fundamental invariants of a smooth variety, its associated power series, namely
\[
\widetilde{Q}(z)=\frac{z(1+ye^{-z})}{1-e^{-z}},
\]
admits the somewhat undesirable feature of being un-normalized, i.e.,
\[
\widetilde{Q}(0)=1+y\neq 1.
\] 
However, the power series
\begin{equation}\label{npsd}
Q(z)=\widetilde{Q}(z(1+y))\cdot (1+y)^{-1}=\frac{z(1+y)}{1-e^{-z(1+y)}}-yz
\end{equation}
is such that $Q(0)=1$, and equation \eqref{npsd} further implies that the characteristic class $T_y$ associated with $Q(z)$ is a normalized class which agrees with $\widetilde{T}_y$ in top degree. As such, the characteristic class $T_y$ will be referred to as the \emph{normalized Hirzebruch class}, and if $X$ is a smooth variety then the homology class
\[
T_y(TX)\cap [X] \in H_*(X)\otimes \Q[y]
\]
will be referred to as the normalized Hirzebruch class of $X$, which will be denoted $T_y(X)$. Since $T_y$ and $\widetilde{T}_y$ agree in top degree we have  
\begin{equation}\label{d0}
\int_XT_y(X)=\int_X\widetilde{T}_y(X)=\chi_y(X),
\end{equation}
and evaluating $T_y(X)$ at $y=-1,0,1$ yields
\[
T_{-1}(X)=c(X), \quad T_0(X)=\text{td}(X), \quad \text{and} \quad T_1(X)=L(X),
\]
thus the normalized Hirzebruch class unifies the notions of Chern, Todd, and $L$-class while retaining $\chi_y(X)$ in dimension zero. This unifying aspect of the normalized Hirzebruch class makes it more fundamental in some sense than its un-normalized counterpart, and when the context is clear we will often refer to the normalized Hirzebruch class simply as the Hirzebruch class. 

There are various extensions of Hirzebruch classes to singular $X$ \cite{MHC}, and it is the `stringy' notion with which we concern ourselves. Given an invariant for smooth varieties, an extension of the invariant to singular varieties is often referred to as \emph{stringy} if it is invariant with respect to the notion of $K$-equivalence in birational geometry. In particular, stringy invariants are preserved under maps which are crepant, i.e., proper birational maps $f:Y\to X$ such that $K_Y=f^*K_X$. Stringy invariants require restrictions on the allowed singularities for their definition, and for $X$ with at worst Gorenstein canonical singularities, there exists \emph{stringy Hirzebruch classes} $\widetilde{T}_y^{\text{str}}(X)$ and $T_y^{\text{str}}(X)$ which agree in dimension-zero, and this dimension-zero component common to both, which we denote by $\chi^{\text{str}}_y(X)$, will be referred to as the \emph{stringy} $\chi_y$-\emph{genus} of $X$. We note that while Hirzebruch's $\chi_y$-genus is a polynomial in $y$, $\chi_y^{\text{str}}(X)$ is a priori a rational function in $y$. The stringy $\chi_y$-genus is invariant with respect to crepant maps, and if $X$ admits a crepant resolution $\rho:Z\to X$, then 
\begin{equation}\label{crp}
\rho_*\widetilde{T}_y(Z)=\widetilde{T}_y^{\text{str}}(X) \quad \text{and} \quad \rho_*T_y(Z)=T_y^{\text{str}}(X),
\end{equation}
and in such a case $\chi_y^{\text{str}}(X)$ is a polynomial whose coefficients yield linear relations for the \emph{stringy Hodge numbers} of $X$ as defined by Batyrev \cite{SHNB}. Evaluating $T^{\text{str}}_y(X)$ at $y=-1$  yields the \emph{stringy Chern class} of $X$, and as such, evaluating $\chi^{\text{str}}_y(X)$ at $y=-1$ yields the \emph{stringy Euler characteristic} of $X$ \cite{SHNB}\cite{SCCA}\cite{SCCdF}. Evaluating $\chi^{\text{str}}_y(X)$ at $y=0,1$ then yields stringy versions of the arithmetic genus and signature. In \S\ref{SHC} we review in further detail the general theory for stringy characteristic classes.

In this note, we compute stringy Hirzebruch classes of elliptic fibrations $Y\to B$ whose total space $Y$ may be embedded as a hypersurface in a $\mathbb{P}^2$-bundle given by a global Weierstrass equation
\begin{equation}\label{we}
Y:(y^2z=x^3+fxz^2+gz^3)\subset \mathbb{P}(\mathscr{E}),
\end{equation}
where $f$ and $g$ are sections of tensor powers of a line bundle $\mathscr{L}\to B$, referred to as \emph{the fundamental line bundle} of $Y\to B$. We refer to such elliptic fibrations as \emph{Weierstrass fibrations}. The rank-3 vector bundle $\mathscr{E}\to B$ which is projectivized to construct the ambient space of $Y$ is then given by $\mathscr{E}=\mathscr{O}_B\oplus \mathscr{L}^2\oplus \mathscr{L}^3$, where $\mathscr{L}$ is the fundamental line bundle.

The Weierstrass fibrations we consider consists of 14 different families which are relevant for constructing super-symmetric string vacua whose associated field theories admit non-Abelian gauge symmetries \cite{EJK}. In $F$-theory, a gauge group $\mathcal{G}_Y$ is associated with a singular Weierstrass fibration $Y\to B$ according to the type singular fibers which appear upon a resolution of singularities and its Mordell-Weil group of rational sections. In order to avoid complications associated with singular compactifications of string vacua, crepant resolutions have been constructed in the $F$-theory literature for a number of Weierstrass fibrations \cite{EYSU5}\cite{TFoS}\cite{EJK}, and in such a case we exploit property \eqref{crp} to compute their stringy Hirzebruch classes. There are often multiple crepant resolutions of Weierstrass fibrations, which are connected by a network of flop transitions. As the particular resolution one uses among a network of choices is often irrelevant from the physics perspective, the physically relevant invariants one computes via such a resolution should be seen as \emph{stringy invariants} of the \emph{singular} Weierstrass fibration one is resolving, rather than invariants of the resolved geometry. In any case, as such crepant resolutions are constructed by successively blowing up the ambient space $\mathbb{P}(\mathscr{E})$ of a Weierstrass fibration along complete intersections, in \S\ref{CSCC} we employ intersection-theoretic techniques along with a blowup formula of Aluffi (\cite{ACCB}, Theorem~1.2) to derive a formula for general characteristic classes of blowups along (possibly singular) complete intersections, along with an associated pushforward formula for computing $\rho_*$ as in \eqref{crp}.

As an elliptic fibration may be characterized by its configuration of singular fibers, it is often possible to compute invariants of an elliptic fibration $Y\to B$ relative to an unspecified base $B$. For example, since only the singular fibers of an elliptic fibration $Y\to B$ contribute its topological Euler characteristic $\chi(Y)$, one may stratify the discriminant of the fibration into strata over which the topological type of the fiber is constant, and then give a formula for $\chi(Y)$ in terms of the combinatorics of the fiber structure and the geometry of the strata of the discriminant. Such methods were employed for example in \cite{GMEC3F}, where formulas for the Euler characteristic of elliptic 3-folds were given in terms of a certain representation of a Lie group determined by the fiber structure of the fibration.

More recently, techniques from intersection theory have been used to compute the Euler characteristic of elliptic fibrations over an unspecified base of arbitrary dimension \cite{AE1}\cite{AE2}\cite{EFY}\cite{F}, generalizing formulas found in the physics literature for elliptic 3- and 4-folds \cite{SVW}\cite{KLRY}. In particular, if $Y\to B$ is a smooth Weierstrass fibration with fundamental line bundle $\mathscr{L}\to B$, its topological Euler characteristic was computed in \cite{AE1} as
\[
\chi(Y)=12c_1(\mathscr{L})\sum_{i=0}^{\text{dim}(B)-1}c_i(B)(-6c_1(\mathscr{L}))^{\text{dim}(B)-1-i},
\]
so that a formula  for $\chi(Y)$ may be given in terms of invariants of an unspecified base of arbitrary dimension. Such formulas were referred to as `Sethi-Vafa-Witten formulas' in \cite{AE1}, and similar formulas for stringy Euler characteristics of singular Weierstrass fibrations and elliptic fibrations not in Weierstrass form have appeared in \cite{FvH}\cite{EJK}\cite{AE2}\cite{EFY}. In a similar vein, given a singular Weierstrass fibration $Y\to B$, in \S\ref{GF} we derive a generating function $\chi_y^{\text{str}}(Y;t)=\sum_na_nt^n$, where $a_n$ is a formula for $\chi_y^{\text{str}}(Y)$ over an unspecified base $B$ of dimension $n$, solely in terms of invariants of $B$. Such formulas can then be used to derive relations between the stringy Hodge numbers of $Y$, via `stringy' Hirzebruch-Riemann Roch. For example, in \S\ref{GF} we show that if $Y$ is a Weierstrass fibration with gauge group $\mathcal{G}_Y=\text{SO}(6)$, then
\[
\chi_y^{\text{str}}(Y;t)=\left(1-5y+{\frac { \left( y+1 \right)  \left(  \left( e^{t(1+y)L}+4 \right) y-e^{t(1+y)L} \right) }{{e^{2t(1+y)L}}+y}}\right)\exp\left(R(t)\odot \left(\frac{-tC'(t)}{C(t)}\right)\right),
\]
where $L$ is the first Chern class of the fundamental line bundle $\mathscr{L}\to B$ of $Y$, $R(t)=\ln (Q(t))$ for $Q(t)$ the characteristic power series of the normalized Hirzebruch class as given in equation \eqref{npsd}, $C(t)$ is the formal power series $C(t)=\sum_{i=0}^{\infty}(-1)^ic_it^i$ (where the $c_i$s are interpreted as the Chern classes of $B$), and $\odot$ denotes the Hadamard product of power series. 
\\ 

\noindent \emph{Notation and conventions.} A variety will always be assumed to be a reduced and separated scheme of finite type over the complex numbers $\C$. Given a holomorphic vector bundle $\mathscr{E}\to B$ over a variety $B$, $\mathbb{P}(\mathscr{E})\to B$ will always be taken to denote the associated projective bundle of lines in $\mathscr{E}$, and given a line bundle $\mathscr{L}\to B$, its $m$-th tensor power will be denoted $\mathscr{L}^m\to B$.    

\section{Stringy characteristic classes}\label{SHC}
Let $X$ be a variety, and let $A_*X$ denote the group of algebraic cycles in $X$ modulo rational equivalence. Assume $\mathfrak{C}$ is a characteristic class associated with holomorphic vector bundles $\mathscr{E}\to X$ such that for every $\alpha\in A_*X$ 
\[
\mathfrak{C}(\mathscr{E})\cap \alpha \in A_*X\otimes R,
\]
where $R$ is a commutative ring with unity. If $X$ is smooth, the class $\mathfrak{C}(TX)\cap [X]$ will be denoted simply by $\mathfrak{C}(X)$. If $X$ is singular, we may define a `stringy' extension $\mathfrak{C}^{\text{str}}(X)$ of $\mathfrak{C}$ provided we suitably restrict the singularities of $X$. As such, we now define what it means for $X$ to have at worst Gorenstein canonical singularities. 

\begin{definition}\label{D1}
Let $X$ be a normal irreducible variety over $\C$. We will say that $X$ admits at worst \emph{Gorenstein canonical singularities} if and only if the canonical divisor $K_X$ is Cartier, and for any resolution of singularities $\rho:Z\to X$ such that the exceptional locus is a smooth normal crossing divisor with irreducible components $\{D_i\}_{i\in I}$, the discrepancy divisor 
\[
K_Z-\rho^*K_X=\sum_{i\in I}a_iD_i
\]
is such that $a_i\geq 0$. Such a resolution $\rho:Z\to X$ will be referred to as a \emph{log resolution} of $X$ (log resolutions always exist).
\end{definition}

\begin{definition}\label{D2}
Let $f:Y\to X$ be a proper birational map between varieties. We will refer to $f$ as \emph{crepant} if and only if $K_Y=f^*K_X$, and if $f$ is also a log resolution, then $f$ will be referred to as a \emph{crepant resolution}.
\end{definition}

Now let $X$ be a normal variety with at worst Gorenstein canonical singularities, $\rho:Z\to X$ a log resolution of $X$, and let $D_{\rho}=\sum_{i\in I} a_iD_i$ denote the discrepancy divisor $K_Z-\rho^*K_X$. For $J$ a subset of the index set $I$ of the irreducible components of $D_{\rho}$, we let $D_J=\bigcap_{j\in J}D_j$, with the convention that $D_{\varnothing}=Z$. Techniques of motivic integration may then be used to show the class\footnote{The map $\rho_*:A_*Z\to A_*X$ is the proper pushforward associated with the map $\rho:Z\to X$. Moreover, by $\mathfrak{C}(D_J)$ in \eqref{shcd} we really mean $i_{J*}\mathfrak{C}(D_J)$, where $i_J$ denotes the inclusion $i_J:D_J\hookrightarrow Y$. Also, if $\int_{\mathbb{A}^1}\mathfrak{C}(\mathbb{A}^1)=1$, replace $\prod_{j\in J}\frac{\int_{\mathbb{A}^1}\mathfrak{C}(\mathbb{A}^1)-\left(\int_{\mathbb{A}^{1}}\mathfrak{C}(\mathbb{A}^{1})\right)^{a_j+1}}{\left(\int_{\mathbb{A}^{1}}\mathfrak{C}(\mathbb{A}^{1})\right)^{a_j+1}-1}$ by $\prod_{j\in J}\frac{-a_j}{a_j+1}=\lim_{t\to 1}\frac{t-t^{a_j+1}}{t^{a_j+1}-1}$ in \eqref{shcd}.}
\begin{equation}\label{shcd}
\rho_*\left(\sum_{J\subset I}\mathfrak{C}(D_J)\cdot \prod_{j\in J}\frac{\int_{\mathbb{A}^1}\mathfrak{C}(\mathbb{A}^1)-\left(\int_{\mathbb{A}^{1}}\mathfrak{C}(\mathbb{A}^{1})\right)^{a_j+1}}{\left(\int_{\mathbb{A}^{1}}\mathfrak{C}(\mathbb{A}^{1})\right)^{a_j+1}-1}\right)
\end{equation}
is independent of the resolution $\rho:Z\to X$ (\cite{DLMI}, Proposition~6.3.2 ), and as such, it is an intrinsic invariant of $X$, which leads us to the following
\begin{definition}\label{D3}
The class \eqref{shcd} will be referred to as the \emph{stringy} $\mathfrak{C}$-class of $X$, and will be denoted $\mathfrak{C}^{\text{str}}(X)$. 
\end{definition}
It immediately follows from Definition~\ref{D3} that if the map $\rho$ appearing in \eqref{shcd} is crepant, then $\mathfrak{C}^{\text{str}}(X)=\rho_*\mathfrak{C}(Z)$, and in particular, if $X$ is smooth then $\mathfrak{C}^{\text{str}}(X)=\mathfrak{C}(X)$. 
 
Now suppose $V$ and $W$ are $K$-equivalent varieties with at worst Gorenstein canonical singularities, so that there exists a common log resolution 
\[  
\xymatrix{
 &Z \ar[dl]_{v} \ar[dr]^{w}  & \\
V& & W \\
}
\]
such that $v^*K_V=w^*K_W$. It then follows that the the discrepancy divisors $D_v=K_Z-v^*K_V$ and $D_w=K_Z-w^*K_W$ coincide, thus by Definition~\ref{D3} there exists a class $C\in A_*Z\otimes \text{Frac}(R)$ such that
\[
v_*(C)=\mathfrak{C}^{\text{str}}(V), \quad \text{and} \quad w_*(C)=\mathfrak{C}^{\text{str}}(W).
\] 
This further implies
\begin{equation}\label{e1}
\int_V\mathfrak{C}^{\text{str}}(V)=\int_W\mathfrak{C}^{\text{str}}(W), 
\end{equation}
and moreover, if $f:X\to Y$ is crepant then
\begin{equation}\label{e2}
\int_X\mathfrak{C}^{\text{str}}(X)=\int_Y\mathfrak{C}^{\text{str}}(Y).
\end{equation}

With regards towards the cases where $\mathfrak{C}$ is either the Hirzebruch class $\widetilde{T}_y$ or the normalized Hirzebruch class $T_y$ as defined in \S\ref{intro}, equation \eqref{d0} along with the definition of $\mathfrak{C}^{\text{str}}$ implies
\begin{equation}\label{scyc}
\int_X \widetilde{T}_y^{\text{str}}(X)=\int_X T_y^{\text{str}}(X),
\end{equation}
and as such, we take either side of equation \eqref{scyc} as the definition of the \emph{stringy} $\chi_y$-\emph{characteristic} of $X$, which we denote by $\chi_y^{\text{str}}(X)$. The stringy $\chi_y$-characteristic $\chi_y^{\text{str}}(X)$ evaluated at $y=-1$ then yields the stringy Euler characteristic $\chi_{\text{str}}(X)$ as defined by Batyrev in \cite{SHNB}. We note that while Hirzebruch's $\chi_y$-genus is a polynomial in $y$, $\chi_y^{\text{str}}(X)$ is a priori a rational function in $y$. In the case that $X$ admits a crepant resolution $\rho:Z\to X$, the coefficient of $y^p$ in $\chi_y^{\text{str}}(X)$ encodes linear relations between the stringy Hodge numbers $h_{\text{str}}^{p,q}(X)$ for $q=0,...,\text{dim}(X)$ (which in such a case coincide with the usual Hodge numbers $h^{p,q}(Z)$), thus yielding a `stringy' version of the Hirzebruch-Riemann-Roch theorem. For more on stringy Hodge numbers, see \cite{SHNB}.

\section{Computing stringy characteristic classes}\label{CSCC}
Let $\mathfrak{C}$ be a cohomological characteristic class for vector bundles which is multiplicative on short exact sequences, so that if
\[
0\longrightarrow \mathscr{A} \longrightarrow \mathscr{B} \longrightarrow \mathscr{C} \longrightarrow 0
\]
is a short exact sequence of vector bundles, then
\[
\mathfrak{C}(\mathscr{B})=\mathfrak{C}(\mathscr{A})\mathfrak{C}(\mathscr{C}). 
\]
If $Y$ is a smooth variety, we recall $\mathfrak{C}^{\text{str}}(Y)=\mathfrak{C}(Y)=\mathfrak{C}(TY)\cap [Y]$. We note that we don't make the assumption that $\mathfrak{C}(\mathscr{O})=1$, as to incorporate the (non-normalized) Hirzebruch class $\widetilde{T}_y$. For $Y$ with at worst Gorenstein canonical singularities, assume $Y$ is a complete intersection in a smooth ambient variety $Z$. We then consider crepant resolutions of $Y$ which are obtained by successively blowing up $Z$ along (possibly singular) complete intersections and then taking a proper transform of $Y$ along the blowups. We now outline a general method for computing $\mathfrak{C}^{\text{str}}(Y)$ via such a resolution. 

A complete intersection is a special case of a regular embedding, whose definition we now recall. A closed embedding $Y\hookrightarrow Z$ is said to be \emph{regular} of codimension $d$ if the ideal sheaf $\mathscr{I}_Y$ of $Y$ in $Z$ is such that $\mathscr{I}_Y(U)$ locally generated by a regular sequence in $\mathscr{O}_Z(U)$ of length $d$ for every affine open set $U\subset Z$. In such a case, the normal cone to $Y$ in $Z$ is in fact a vector bundle $N_YZ\to Y$ of rank $d$, which we refer to as the \emph{normal bundle} to $Y$ in $Z$. The sheaf of sections of $N_YZ\to Y$ is then dual to $\mathscr{I}_Y/\mathscr{I}_Y^2$. A regular embedding $Y\hookrightarrow Z$ of codimension $d$ is said to be a \emph{complete intersection} if there exists a vector bundle $\mathcal{N}\to Z$ of rank $d$ such that the image of the embedding $Y\hookrightarrow Z$ may be identified with the zero-scheme of a regular section of $\mathcal{N}\to Z$. In such a case, the vector bundle $\mathcal{N}\to Z$ restricts to $Y$ as $N_YZ$. We now recall some facts about blowups of regular embeddings along a common regularly embedded subscheme.
\begin{lemma}\label{L1}
Let $X\longrightarrow Y\longrightarrow Z$ be a sequence of regular embeddings with $Z$ smooth, and let $p:\widetilde{Z}\to Z$ and $q:\widetilde{Y}\to Y$ be the blowups of $Z$ and $Y$ along $X$ respectively. Then 
\\

(i) the composition $X\longrightarrow Z$ is a regular embedding, and there exists a short exact sequence of vector bundles $$0\longrightarrow N_{X}Y \longrightarrow N_{X}Z \longrightarrow \left.N_{Y}Z\right|_{X} \longrightarrow 0.$$

(ii) $X$ may be realized as the zero-scheme of a regular section of a vector bundle $\mathscr{E}\to Z$. The normal bundle to $X$ in $Z$, denoted $N_XZ$, then embeds as a subbundle of $\left.\mathscr{E}\right|_X$. If $X$ is in fact a complete intersection in $Z$, then $\left.\mathscr{E}\right|_X=N_XZ$.\\

(iii) there exists a sequence of regular embeddings $\widetilde{Y} \overset{i}\longrightarrow \widetilde{Z}\overset{j}\longrightarrow \mathbb{P}(\mathscr{E})$ which forms the top row of the following commutative diagram
\begin{equation}\label{bd}
\xymatrix{
&\widetilde{Y} \ar[d]^q \ar[r]^i &\widetilde{Z} \ar[d]^p \ar[r]^j & \mathbb{P}(\mathscr{E}) \ar[dl]^{\pi} \\
X\ar[r] &Y \ar[r] & Z,
}
\end{equation}
where $\pi:\mathbb{P}(\mathscr{E})\to Z$ denotes the projective bundle of lines in $\mathscr{E}$. In particular, $\widetilde{Y}$ and $\widetilde{Z}$ are both regularly embedded in the smooth variety $\mathbb{P}(\mathscr{E})$.
\\

(iv) $\left.\mathscr{O}(-1)\right|_{\widetilde{Z}}=\mathscr{O}(E_p)$ and $\left.\mathscr{O}(-1)\right|_{\widetilde{Y}}=\mathscr{O}(E_q)$, where $E_p$ and $E_q$ are the exceptional divisors of the blowups $p:\widetilde{Z}\to Z$ and $q:\widetilde{Y}\to Y$ respectively, and $\mathscr{O}(-1)$ is the tautological line bundle of $\pi:\mathbb{P}(\mathscr{E})\to Z$.\\

(v) $\widetilde{Y}$ is the proper transform of $Y$ under the blowup $p:\widetilde{Z}\to Z$, and  $$N_{\widetilde{Y}}\widetilde{Z}=q^*N_YZ\otimes \mathscr{O}(-E_q).$$
\end{lemma}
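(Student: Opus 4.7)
The five parts assemble classical facts from the intersection theory of regular embeddings and blowups (cf.\ Fulton, \emph{Intersection Theory}, Appendix~B); my plan is to treat them in the stated order via the conormal sheaf formalism and the universal property of blowing up.

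For part (i), I would verify regularity of $X\hookrightarrow Z$ locally: lifting a regular sequence $(\bar g_1,\dots,\bar g_n)$ in $\mathscr{O}_Y=\mathscr{O}_Z/\mathscr{I}_Y$ generating the image of $\mathscr{I}_X$ and concatenating with a local regular sequence $(f_1,\dots,f_m)$ generating $\mathscr{I}_Y$ produces a regular sequence in $\mathscr{O}_Z$ generating $\mathscr{I}_X$. Dualizing the conormal exact sequence
\[
0\to \mathscr{I}_Y/\mathscr{I}_Y^2\otimes \mathscr{O}_X\to \mathscr{I}_X/\mathscr{I}_X^2\to \mathscr{I}_{X/Y}/\mathscr{I}_{X/Y}^2\to 0,
\]
whose exactness is recorded by the same local computation, yields the stated short exact sequence of normal bundles. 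For part (ii), after choosing a sufficiently ample line bundle $\mathscr{M}$ on $Z$ (which is quasi-projective in the situations of interest) so that $\mathscr{I}_X\otimes \mathscr{M}$ is globally generated, a choice of $N$ generators supplies a surjection $(\mathscr{M}^{-1})^{\oplus N}\twoheadrightarrow \mathscr{I}_X$ and hence a section of $\mathscr{E}=\mathscr{M}^{\oplus N}$ with zero scheme $X$; the composition $\mathscr{E}^\vee\to \mathscr{I}_X\to \mathscr{I}_X/\mathscr{I}_X^2$ is surjective, whose dual is the asserted subbundle embedding $N_XZ\hookrightarrow \mathscr{E}|_X$. When $X$ is a complete intersection, one takes $\mathscr{E}$ of rank equal to the codimension of $X$ in $Z$, making the surjection an isomorphism of vector bundles.

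For parts (iii) and (iv), the section $s\in H^0(Z,\mathscr{E})$ from (ii) defines a morphism $Z\setminus X\to \mathbb{P}(\mathscr{E})$ sending $z$ to the line spanned by $s(z)$, which extends to a morphism $j:\widetilde Z\to \mathbb{P}(\mathscr{E})$: since $p^{-1}\mathscr{I}_X\cdot \mathscr{O}_{\widetilde Z}=\mathscr{O}(-E_p)$ by the universal property of blowing up, the pulled-back section $p^*s$ vanishes to order exactly one along $E_p$, and writing $p^*s=\sigma\cdot t$ for $\sigma$ a local equation of $E_p$ yields a nowhere-vanishing section $t$ of $p^*\mathscr{E}\otimes \mathscr{O}(E_p)$ whose span in $p^*\mathscr{E}$ supplies the required line at every point. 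I would show $j$ is a regular embedding by identifying its image with the zero scheme of the induced section of the tautological quotient bundle $\mathcal{Q}=\pi^*\mathscr{E}/\mathscr{O}(-1)$ on $\mathbb{P}(\mathscr{E})$: the zero locus has the expected codimension $\mathrm{rank}\,\mathcal{Q}=\mathrm{rank}\,\mathscr{E}-1$, which matches $\dim \mathbb{P}(\mathscr{E})-\dim \widetilde Z$, so the section is regular. By construction the sub-line-bundle of $p^*\mathscr{E}$ generated by $t$ is $j^*\mathscr{O}(-1)$, and the relation $p^*s=\sigma\cdot t$ identifies it with $\mathscr{O}(E_p)$, proving (iv). Applying the same construction to $X\hookrightarrow Y\hookrightarrow Z$ realizes $\widetilde Y=\mathrm{Bl}_XY$ as a regularly embedded subscheme of $\widetilde Z$ compatible with the diagram, with $\mathscr{O}(-1)|_{\widetilde Y}=\mathscr{O}(E_q)$.

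For part (v), $\widetilde Y$ is the proper transform of $Y$ under $p$ since $p$ is an isomorphism away from $E_p$ and the universal property identifies $\widetilde Y$ with the closure of $p^{-1}(Y\setminus X)$. Because $X\subset Y$, the natural map $p^*\mathscr{I}_Y\to \mathscr{O}_{\widetilde Z}$ takes values in $\mathscr{I}_{E_p}$, so dividing by a local equation of $E_p$ produces a map $p^*\mathscr{I}_Y\otimes \mathscr{O}(E_p)\to \mathscr{O}_{\widetilde Z}$ with image $\mathscr{I}_{\widetilde Y}$; restricting to $\widetilde Y$ and comparing ranks (both sheaves are locally free of rank $\mathrm{codim}(Y,Z)$ by the regularity of the embeddings) yields an isomorphism
\[
q^*(\mathscr{I}_Y/\mathscr{I}_Y^2)\otimes \mathscr{O}(E_q)\ \xrightarrow{\ \sim\ }\ \mathscr{I}_{\widetilde Y}/\mathscr{I}_{\widetilde Y}^2,
\]
whose dual is the stated formula $N_{\widetilde Y}\widetilde Z=q^*N_YZ\otimes \mathscr{O}(-E_q)$. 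The main delicate point I anticipate is verifying that the embedding $j$ in (iii) is regular rather than merely closed, and the closely related claim in (v) that the conormal comparison map is an isomorphism and not merely a surjection; both rest on leveraging the regular-embedding hypotheses to control symmetric-algebra resolutions of the ideal sheaves and to rule out hidden torsion, which is where the local regular-sequence description of (i) does its essential work.
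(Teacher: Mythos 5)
Note first that the paper disposes of this lemma by pure citation to Fulton's Appendix B (B.7.4 for (i), B.8.2 for (ii)--(iii), B.6.3/B.6.9 for (iv), B.6.10 for (v)), so your self-contained re-derivation is a different route; your (i) and (ii) are fine. However, there is a genuine gap exactly at the step you yourself flag as delicate: the regularity of $j$ in (iii). Your proposal identifies $j(\widetilde{Z})$ with the zero scheme of the induced section $\bar{s}$ of $\mathcal{Q}=\pi^*\mathscr{E}/\mathscr{O}(-1)$ and deduces regularity from a codimension count, but this identification fails whenever $\operatorname{rank}\mathscr{E}>\operatorname{codim}(X,Z)$ --- which is the typical output of your own construction in (ii), where $\mathscr{E}=\mathscr{M}^{\oplus N}$ with $N$ the number of chosen generators. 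Indeed $\bar{s}$ vanishes at \emph{every} point of $\pi^{-1}(X)$ (since $s$ vanishes identically along $X$), whereas $\widetilde{Z}\cap\pi^{-1}(X)=E_p\cong\mathbb{P}(N_XZ)$ has fibre dimension $d-1<N-1$; hence $Z(\bar{s})$ strictly contains $\widetilde{Z}$, it acquires the component $\pi^{-1}(X)$ of codimension $d<\operatorname{rank}\mathcal{Q}$, and $\bar{s}$ is not a regular section. Your argument is valid only in the complete-intersection case $\operatorname{rank}\mathscr{E}=d$ (which does cover the paper's later applications), but not for the lemma as stated. For general $\mathscr{E}$ one should argue as Fulton does: $\widetilde{Z}=\operatorname{Proj}\bigl(\bigoplus_n\mathscr{I}_X^n\bigr)$, the surjection $\operatorname{Sym}(\mathscr{E}^{\vee})\to\bigoplus_n\mathscr{I}_X^n$ gives the closed embedding into $\mathbb{P}(\mathscr{E})$, and since an ideal generated by a regular sequence is of linear type, the ideal of $\widetilde{Z}$ is generated by the linear forms corresponding to syzygies of the generators, which one checks on the standard charts to be regular sequences.

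Two smaller points. In (iv) there is a sign slip: dividing $p^*s$ by a local equation $\sigma$ of $E_p$ yields a nowhere-vanishing section of $p^*\mathscr{E}\otimes\mathscr{O}(-E_p)$, not of $p^*\mathscr{E}\otimes\mathscr{O}(E_p)$; the associated tautological subbundle of $p^*\mathscr{E}$ is then $\mathscr{O}(E_p)$, so your final conclusion $\left.\mathscr{O}(-1)\right|_{\widetilde{Z}}=\mathscr{O}(E_p)$ is correct but contradicts your stated twist. In (v), the essential content --- that the image of the divided map $p^*\mathscr{I}_Y\otimes\mathscr{O}(E_p)\to\mathscr{O}_{\widetilde{Z}}$ is exactly $\mathscr{I}_{\widetilde{Y}}$, i.e.\ that the proper transform is obtained by dividing the pulled-back ideal by a single power of the exceptional ideal --- is asserted rather than proved, and your rank-comparison step presupposes that $\widetilde{Y}\hookrightarrow\widetilde{Z}$ is regular of codimension $\operatorname{codim}(Y,Z)$, which is part of what is being established; this is precisely the content of Fulton B.6.10, which the paper cites. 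These last two items are repairable, but the (iii) step needs the Rees-algebra/linear-type argument (or the citation) to close the gap in the stated generality.
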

\begin{proof}
$(i)$ follows from B.7.4 in \cite{IT}. $(ii)$ and $(iii)$ follow from B.8.2 in \cite{IT}. $(iv)$ follows B.6.3 and B.6.9 in \cite{IT}. $(v)$ follows from B.6.10 in \cite{IT}. 
\end{proof}
We now arrive at the following
\begin{proposition}\label{P1}
Under the assumptions of Lemma~\ref{L1}, if $\widetilde{Y}$ is smooth then
\begin{equation}\label{gccf}
\mathfrak{C}(T\widetilde{Y})=\frac{\mathfrak{C}(\mathscr{O}(E_q))\mathfrak{C}(\left.p^*\mathscr{E}\right|_{\widetilde{Y}}\otimes \mathscr{O}(-E_q))\mathfrak{C}(\left.p^*TZ\right|_{\widetilde{Y}})}{\mathfrak{C}(\mathscr{O}_{\widetilde{Y}})\mathfrak{C}(q^*N_YZ\otimes \mathscr{O}(-E_q))\mathfrak{C}(\left.p^*\mathcal{N}\right|_{\widetilde{Y}})\mathfrak{C}(\left.p^*\mathcal{Q}\right|_{\widetilde{Y}}\otimes \mathscr{O}(-E_q))}
\end{equation}
where $\mathcal{N}$ and $\mathcal{Q}$ are bundles on $Z$ which restrict to $X$ as $N_XZ$ and the quotient bundle $\left.\mathscr{E}\right|_{X}/N_XZ$, respectively \footnote{The RHS of  formula \eqref{gccf} still makes sense when the bundle $\mathcal{N}$ doesn't exist, since one can first cap classes with $\mathfrak{C}(\mathscr{O}(E_q))$ which yields classes supported on subvarieties of $E_q$, and then instead of capping with $\mathfrak{C}(p^*\mathcal{N})\mathfrak{C}(p^*\mathcal{Q}\otimes \mathscr{O}(-E_q))$ one can then cap with with $\mathfrak{C}(r^*N_XZ)\mathfrak{C}(r^*Q\otimes \mathscr{O}(-\left.E_q\right|_{E_q}))$, where $r=\left.q\right|_{E_q}:E_q\to X$ and $Q=\left.\mathscr{E}\right|_{X}/N_XZ$.}. In particular, if $X$ is in fact a complete intersection in $Z$, so that $N_XZ=\left.\mathscr{E}\right|_{X}=\left.\mathcal{N}\right|_{X}$ and $\mathcal{Q}=0$, and $\mathfrak{C}(\mathscr{O})=1$, then
\begin{equation}\label{sccf}
\mathfrak{C}(T\widetilde{Y})=\frac{\mathfrak{C}(\mathscr{O}(E_q))\mathfrak{C}(\left.p^*\mathcal{N}\right|_{\widetilde{Y}}\otimes \mathscr{O}(-E_q))\mathfrak{C}(\left.p^*TZ\right|_{\widetilde{Y}})}{\mathfrak{C}(q^*N_YZ\otimes \mathscr{O}(-E_q))\mathfrak{C}(\left.p^*\mathcal{N}\right|_{\widetilde{Y}})}.
\end{equation} 
\end{proposition}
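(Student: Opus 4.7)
The plan is to express $\mathfrak{C}(T\widetilde{Y})$ as a product and quotient of characteristic classes of bundles pulled back from $Z$ and naturally defined on $\mathbb{P}(\mathscr{E})$, by repeatedly invoking the multiplicativity of $\mathfrak{C}$ on short exact sequences. Since $\widetilde{Y}$ is smooth and, by Lemma~\ref{L1}(iii), both $\widetilde{Y}\hookrightarrow\widetilde{Z}$ and $\widetilde{Z}\hookrightarrow\mathbb{P}(\mathscr{E})$ are regular embeddings with smooth target $\mathbb{P}(\mathscr{E})$, the tangent--normal sequences for these two embeddings, together with the relative tangent sequence $0\to T_{\pi}\to T\mathbb{P}(\mathscr{E})\to\pi^{*}TZ\to 0$ and the relative Euler sequence $0\to\mathscr{O}\to\pi^{*}\mathscr{E}\otimes\mathscr{O}(1)\to T_{\pi}\to 0$, yield
\[
\mathfrak{C}(T\widetilde{Y})=\frac{\mathfrak{C}(\pi^{*}\mathscr{E}\otimes\mathscr{O}(1))|_{\widetilde{Y}}\,\mathfrak{C}(\pi^{*}TZ)|_{\widetilde{Y}}}{\mathfrak{C}(\mathscr{O}_{\widetilde{Y}})\,\mathfrak{C}(N_{\widetilde{Y}}\widetilde{Z})\,\mathfrak{C}(N_{\widetilde{Z}}\mathbb{P}(\mathscr{E}))|_{\widetilde{Y}}}.
\]

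Next, I would use Lemma~\ref{L1}(iv) to substitute $\mathscr{O}(1)|_{\widetilde{Y}}=\mathscr{O}(-E_{q})$; together with the fact that $\pi$ restricted to $\widetilde{Y}$ factors through $p|_{\widetilde{Y}}$ via the blowup diagram \eqref{bd}, this converts $\pi^{*}\mathscr{E}\otimes\mathscr{O}(1)|_{\widetilde{Y}}$ and $\pi^{*}TZ|_{\widetilde{Y}}$ into the two nontrivial numerator factors of \eqref{gccf}. Lemma~\ref{L1}(v) immediately gives $\mathfrak{C}(N_{\widetilde{Y}}\widetilde{Z})=\mathfrak{C}(q^{*}N_{Y}Z\otimes\mathscr{O}(-E_{q}))$, producing the first nontrivial denominator factor of \eqref{gccf}. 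At this stage only the factor $\mathfrak{C}(N_{\widetilde{Z}}\mathbb{P}(\mathscr{E}))|_{\widetilde{Y}}$ remains to be rewritten in the shape prescribed by \eqref{gccf}.

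The main obstacle is precisely this computation of $\mathfrak{C}(N_{\widetilde{Z}}\mathbb{P}(\mathscr{E}))$. Matching against \eqref{gccf} forces the K-theoretic identity
\[
\mathfrak{C}(N_{\widetilde{Z}}\mathbb{P}(\mathscr{E}))\,\mathfrak{C}(\mathscr{O}(E_{p}))=\mathfrak{C}(p^{*}\mathcal{N}|_{\widetilde{Z}})\,\mathfrak{C}(p^{*}\mathcal{Q}|_{\widetilde{Z}}\otimes\mathscr{O}(-E_{p})).
\]
In the complete-intersection case $\mathcal{N}=\mathscr{E}$, $\mathcal{Q}=0$, this collapses to the tautological sequence $0\to\mathscr{O}(-1)\to\pi^{*}\mathscr{E}\to\mathscr{Q}_{\mathbb{P}(\mathscr{E})}\to 0$ restricted to $\widetilde{Z}$, combined with the observation that $\widetilde{Z}$ is the scheme-theoretic zero locus of the regular section of $\mathscr{Q}_{\mathbb{P}(\mathscr{E})}$ induced by $s$, so that $N_{\widetilde{Z}}\mathbb{P}(\mathscr{E})\cong\mathscr{Q}_{\mathbb{P}(\mathscr{E})}|_{\widetilde{Z}}$. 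In the general regular-embedding case, however, the induced section of $\mathscr{Q}_{\mathbb{P}(\mathscr{E})}$ fails to be regular and $\widetilde{Z}$ is no longer its full zero scheme (extra components of the form $\pi^{-1}(X)$ appear), so the direct argument breaks down. For this step I would appeal to Aluffi's blowup formula (\cite{ACCB}, Theorem~1.2), which supplies exactly the above identity through the auxiliary bundles $\mathcal{N}$ and $\mathcal{Q}$ extending $N_{X}Z$ and $\mathscr{E}|_{X}/N_{X}Z$.

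Finally, I would combine the previous steps and restrict all identifications from $\widetilde{Z}$ to $\widetilde{Y}$ (so that $E_{p}$ pulls back to $E_{q}$ by Lemma~\ref{L1}(v)) to obtain the general formula \eqref{gccf}. The complete-intersection specialization \eqref{sccf} then follows upon setting $\mathcal{N}=\mathscr{E}$, $\mathcal{Q}=0$, and $\mathfrak{C}(\mathscr{O})=1$, which trivializes the factors $\mathfrak{C}(\mathscr{O}_{\widetilde{Y}})$ and $\mathfrak{C}(p^{*}\mathcal{Q}|_{\widetilde{Y}}\otimes\mathscr{O}(-E_{q}))$ and replaces $\mathfrak{C}(p^{*}\mathcal{N}|_{\widetilde{Y}})$ by $\mathfrak{C}(p^{*}\mathscr{E}|_{\widetilde{Y}})$, producing the simplified right-hand side.
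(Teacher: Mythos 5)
Your skeleton is the same as the paper's: adjunction for $\widetilde{Y}\hookrightarrow\mathbb{P}(\mathscr{E})$, the relative tangent/Euler sequences for $\pi$ (Fulton B.5.8), the exact sequence $0\to N_{\widetilde{Y}}\widetilde{Z}\to N_{\widetilde{Y}}\mathbb{P}(\mathscr{E})\to \left.N_{\widetilde{Z}}\mathbb{P}(\mathscr{E})\right|_{\widetilde{Y}}\to 0$, and Lemma~\ref{L1}(iv),(v), reducing everything to the evaluation of $\mathfrak{C}(N_{\widetilde{Z}}\mathbb{P}(\mathscr{E}))$. The gap is at exactly that remaining step. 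You assert that Aluffi's Theorem~1.2 of \cite{ACCB} ``supplies exactly'' the identity
\[
\mathfrak{C}(N_{\widetilde{Z}}\mathbb{P}(\mathscr{E}))\,\mathfrak{C}(\mathscr{O}(E_{p}))=\mathfrak{C}(\left.p^{*}\mathcal{N}\right|_{\widetilde{Z}})\,\mathfrak{C}(\left.p^{*}\mathcal{Q}\right|_{\widetilde{Z}}\otimes\mathscr{O}(-E_{p})),
\]
but that theorem is a statement about the total Chern class only, i.e.\ it gives \eqref{ccbf}, whereas the proposition needs the identity for an arbitrary $\mathfrak{C}$ multiplicative on short exact sequences --- in particular for $\widetilde{T}_y$, where even $\mathfrak{C}(\mathscr{O})=1+y\neq 1$. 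Labelling the needed relation ``K-theoretic'' does not close this: a Chern-class identity between given bundles does not formally imply the corresponding identity for every multiplicative class unless you either (a) prove the relation at the level of the Grothendieck group of $\widetilde{Z}$ and argue that $\mathfrak{C}$, being multiplicative with invertible values, descends to a homomorphism on $K^0$, or (b) inspect Aluffi's proof and verify that each step equates classes arising from short exact sequences of (restrictions of) normal bundles, so that $c$ may be replaced by $\mathfrak{C}$ throughout. The paper takes route (b): it recalls the fiber square \eqref{refs}, the resulting normal-bundle sequence giving \eqref{abf}, and the identifications of $\left.N_{\mathbb{P}(\mathscr{E})}\mathbb{P}(f^*\mathscr{E})\right|_{\widetilde{Z}}$ and $\left.N_{\widetilde{\mathscr{E}}}\mathbb{P}(f^*\mathscr{E})\right|_{\widetilde{Z}}$, precisely in order to justify this upgrade and arrive at \eqref{nbf}. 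As written, your citation asserts the conclusion of that upgrade without supplying it, and this is the only genuinely nontrivial point of the proposition.

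On the positive side, your direct argument in the complete-intersection case --- that $\widetilde{Z}$ is the zero scheme of the regular section of the universal quotient bundle induced by $s$, so $N_{\widetilde{Z}}\mathbb{P}(\mathscr{E})\cong \left.\mathscr{Q}_{\mathbb{P}(\mathscr{E})}\right|_{\widetilde{Z}}$ and the tautological sequence plus Lemma~\ref{L1}(iv) finish the computation --- is correct and yields \eqref{sccf} without any reference to \cite{ACCB}. That is a nice self-contained alternative for the case the paper actually uses in its applications, and your observation about why it fails for a general regular embedding (the induced section is no longer regular) correctly explains why the auxiliary bundles $\mathcal{N}$, $\mathcal{Q}$ are needed; but it does not establish the general formula \eqref{gccf}, for which the upgrade of Aluffi's argument described above must be carried out.
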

\begin{proof}
Since $\mathfrak{C}$ is multiplicative it satisfies the adjunction formula, thus
\[
\mathfrak{C}(T\widetilde{Y})=\frac{\mathfrak{C}(\left.T\mathbb{P}(\mathscr{E})\right|_{\widetilde{Y}})}{\mathfrak{C}(N_{\widetilde{Y}}\mathbb{P}(\mathscr{E}))}.
\]
By B.5.8 in \cite{IT} we have
\[
\mathfrak{C}(T\mathbb{P}(\mathscr{E}))=\frac{\mathfrak{C}(\pi^*\mathscr{E}\otimes \mathscr{O}(1))\pi^*\mathfrak{C}(TZ)}{\mathfrak{C}(\mathscr{O}_{\mathbb{P}(\mathscr{E})})},
\]
and since by Lemma~\ref{L1} $\left.\mathscr{O}(1)\right|_{\widetilde{Y}}=\mathscr{O}(-E_q)$, restricting to $\widetilde{Y}$ yields
\[
\mathfrak{C}(\left.T\mathbb{P}(\mathscr{E})\right|_{\widetilde{Y}})=\frac{\mathfrak{C}(\left.p^*\mathscr{E}\right|_{\widetilde{Y}}\otimes \mathscr{O}(-E_q))\mathfrak{C}(\left.p^*TZ\right|_{\widetilde{Y}})}{\mathfrak{C}(\mathscr{O}_{\widetilde{Y}})}.
\]
We now compute $\mathfrak{C}(N_{\widetilde{Y}}\mathbb{P}(\mathscr{E}))$. For this, note by Lemma~\ref{L1} there exists a short exact sequence of vector bundles
\[
0\longrightarrow N_{\widetilde{Y}}\widetilde{Z}\longrightarrow N_{\widetilde{Y}}\mathbb{P}(\mathscr{E})\longrightarrow \left.N_{\widetilde{Z}}\mathbb{P}(\mathscr{E})\right|_{\widetilde{Y}}\longrightarrow 0,
\]
so that 
\[
\mathfrak{C}(N_{\widetilde{Y}}\mathbb{P}(\mathscr{E}))=\mathfrak{C}(N_{\widetilde{Y}}\widetilde{Z})\mathfrak{C}(\left.N_{\widetilde{Z}}\mathbb{P}(\mathscr{E})\right|_{\widetilde{Y}}).
\]
Now since $\mathfrak{C}(N_{\widetilde{Y}}\widetilde{Z})=\mathfrak{C}(q^*N_YZ\otimes \mathscr{O}(-E_q))$ by Lemma~\ref{L1}, the proposition is proved once we show 
\begin{equation}\label{nbf}
\mathfrak{C}(\left.N_{\widetilde{Z}}\mathbb{P}(\mathscr{E})\right|_{\widetilde{Y}})=\frac{\mathfrak{C}(\left.p^*\mathcal{N}\right|_{\widetilde{Y}})\mathfrak{C}(\left.p^*\mathcal{Q}\right|_{\widetilde{Y}}\otimes \mathscr{O}(-E_q))}{\mathfrak{C}(\mathscr{O}(E_q))},
\end{equation}
where $\mathcal{N}$ and $\mathcal{Q}$ are bundles on $Z$ which restrict to $N_XZ$ and the quotient bundle $\left.\mathscr{E}\right|_{X}/N_XZ$ respectively (if such a bundle $\mathcal{N}$ does not exist, see the footnote referenced in the statement of Proposition~\ref{P1}). For $\mathfrak{C}$ the Chern class, formula \eqref{nbf} follows from Theorem~1.2 in \cite{ACCB}, which in the context at hand says
\begin{equation}\label{ccbf}
c(N_{\widetilde{Z}}\mathbb{P}(\mathscr{E}))=\frac{c(p^*\mathcal{N})c(p^*\mathcal{Q}\otimes \mathscr{O}(-E_p))}{c(\mathscr{O}(E_p))}.
\end{equation}
But going through the proof of Theorem~1.2 in \cite{ACCB}, one sees that formula \eqref{ccbf} is essentially obtained by equating Chern classes of normal bundles via exact sequences obtained from the following fiber square of regular embeddings
\begin{equation}\label{refs}
\xymatrix{
\widetilde{Z} \ar[d] \ar[r] & \mathbb{P}(\mathscr{E}) \ar[d] \\
\widetilde{\mathscr{E}} \ar[r] &\mathbb{P}(f^*\mathscr{E}),
}
\end{equation}
where $\widetilde{\mathscr{E}}\to \mathscr{E}$ is the blowup of $\mathscr{E}$ along the zero section of $f:\mathscr{E}\to Z$. In particular, diagram \eqref{refs} along with Lemma~\ref{L1} yields
\begin{equation}\label{abf}
c(N_{\widetilde{Z}}\mathbb{P}(\mathscr{E}))=\frac{c(\left.N_{\widetilde{\mathscr{E}}}\mathbb{P}(f^*\mathscr{E})\right|_{\widetilde{Z}})c(N_{\widetilde{Z}}\widetilde{\mathscr{E}})}{c(\left.N_{\mathbb{P}(\mathscr{E})}\mathbb{P}(f^*\mathscr{E})\right|_{\widetilde{Z}})}.
\end{equation}
Then using the fact that 
\[
\left.N_{\mathbb{P}(\mathscr{E})}\mathbb{P}(f^*\mathscr{E})\right|_{\widetilde{Z}}\cong p^*\mathscr{E} \quad \text{and} \quad \left.N_{\widetilde{\mathscr{E}}}\mathbb{P}(f^*\mathscr{E})\right|_{\widetilde{Z}}\cong p^*\mathscr{E}/\mathscr{O}(E_p),
\]
equation \eqref{abf} then simplifies to 
\[
c(N_{\widetilde{Z}}\mathbb{P}(\mathscr{E}))=\frac{c(N_{\widetilde{Z}}\widetilde{\mathscr{E}})}{c(\mathscr{O}(E_p))}.
\]
The proof then concludes by showing $c(N_{\widetilde{Z}}\widetilde{\mathscr{E}})=c(p^*\mathcal{N})c(p^*\mathcal{Q}\otimes \mathscr{O}(-E_p))$, which again uses the technique of constructing sequences of regular embeddings and the taking Chern classes of the associated short exact sequence of normal bundles. As such, at each stage of the proof $c$ may be replaced by $\mathfrak{C}$, and after restriction to $\widetilde{Y}$ we then arrive at equation \eqref{nbf}, as desired.
\end{proof}

Now suppose $Y_0\hookrightarrow Z_0$ is a complete intersection with $Z_0$ a smooth variety, and assume $Y_0$ has at worst Gorenstein canonical singularities. We then assume there exists a sequence $n$ blowups
\[
Z_n\to Z_{n-1}\to \cdots \to Z_1\to Z_0, 
\]
such that the proper transform of $Y_0$ through the blowups yields a crepant resolution
\[
\rho:Y_n\longrightarrow Y_0.
\] 
We further assume that $Z_{i+1}\to Z_{i}$ is the blowup of $Z_{i}$ along a (possibly singular) complete intersection $X_{i}\to Z_i$, and we assume that $X_i$ is a complete intersection in the proper transform $Y_i$ of $Y_{i-1}$, so that for $i=0,...,n-1$ we have a sequence of regular embeddings $X_i\to Y_i\to Z_i$. By Lemma~\ref{L1} we then have the following commutative diagram
\begin{equation}\label{bd2}
\xymatrix{
                    & Y_n \ar[r]^{i_n} \ar[d]^{q_n} & Z_n \ar[d]^{p_n}  &    \\
X_{n-1} \ar[r] & Y_{n-1} \ar[r]^{i_{n-1}} \ar[d]^{q_{n-1}} & Z_{n-1} \ar[d]^{p_{n-1}}  &      \\
                    & \vdots   \ar[d]^{q_2}              & \vdots \ar[d]^{p_2}  &               \\
X_1 \ar[r]       &Y_1 \ar[r]^{i_1} \ar[d]^{q_1}  &Z_1 \ar[d]^{p_1}\ar[r]^{j_1} & \mathbb{P}(\mathscr{E}_0) \ar[dl]^{\pi_1} \\
X_0      \ar[r]  & Y_0 \ar[r]                    & Z_0,        &        \\
}
\end{equation}
where $\mathscr{E}_0$ is a vector bundle on $Z_0$ such that $X_0$ may be realized as the zero-scheme of a regular section of $\mathscr{E}_0\to Z_0$, and $p_i$ and $q_i$ are the blowups of $Z_{i-1}$ and $Y_{i-1}$ along $X_{i-1}$ respectively. The crepant resolution $\rho:Y_n\to Y_0$ is then given by $\rho=q_1\circ \cdots \circ q_n$. We now wish to compute $\mathfrak{C}(Y_n)$ via an adjunction formula, which exists if $Y_n$ is embedded in a smooth variety. An explicit embedding of $Y_n$ in a smooth variety then follows from the following 
\begin{claim}\label{C1}
Let $Z_i$ be as in diagram \eqref{bd2} for $i=1,...,n$. Then there exists a regular embedding $Z_i\longrightarrow W_i$ with $W_i$ smooth. In particular, $W_i$ may be defined inductively as  $W_i=\mathbb{P}(\mathscr{E}_{i-1})$, where $\mathscr{E}_{i-1}\to W_{n-1}$ is a vector bundle which admits a regular section whose zero-scheme is $X_{i-1}$.  
\end{claim}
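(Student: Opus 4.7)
The plan is to prove the claim by induction on $i$, using parts (i)--(iii) of Lemma~\ref{L1} at each step to promote a regular embedding of $Z_{i-1}$ into a smooth ambient $W_{i-1}$ into one for $Z_i$ into the smooth projective bundle $W_i = \mathbb{P}(\mathscr{E}_{i-1})$.

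For the base case $i=1$, note that $X_0 \hookrightarrow Z_0$ is by hypothesis a complete intersection in the smooth variety $Z_0$, so Lemma~\ref{L1}(ii) already provides the vector bundle $\mathscr{E}_0\to Z_0$ whose regular section cuts out $X_0$. The base of diagram~\eqref{bd} from Lemma~\ref{L1}(iii) then says that the blowup $Z_1 = p_1^{-1}(Z_0)$ embeds regularly in $\mathbb{P}(\mathscr{E}_0)$; since $\mathbb{P}(\mathscr{E}_0)\to Z_0$ is a projective bundle over a smooth base, $W_1 := \mathbb{P}(\mathscr{E}_0)$ is smooth. This is exactly the required regular embedding $Z_1 \hookrightarrow W_1$.

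For the inductive step, assume $Z_{i-1}\hookrightarrow W_{i-1}$ is a regular embedding with $W_{i-1}$ smooth. The composition
\[
X_{i-1} \hookrightarrow Z_{i-1} \hookrightarrow W_{i-1}
\]
is a regular embedding by Lemma~\ref{L1}(i), because $X_{i-1}\hookrightarrow Z_{i-1}$ is a complete intersection (hence regular) by hypothesis. Since $W_{i-1}$ is smooth, Lemma~\ref{L1}(ii) furnishes a vector bundle $\mathscr{E}_{i-1}\to W_{i-1}$ admitting a regular section whose zero-scheme is $X_{i-1}$, which is precisely the bundle named in the statement. Now apply Lemma~\ref{L1}(iii) to the sequence $X_{i-1}\hookrightarrow Z_{i-1}\hookrightarrow W_{i-1}$: denoting the blowup of $W_{i-1}$ along $X_{i-1}$ by $\widetilde W_{i-1}$, we obtain a sequence of regular embeddings $Z_i = \widetilde Z_{i-1} \hookrightarrow \widetilde W_{i-1}\hookrightarrow \mathbb{P}(\mathscr{E}_{i-1})$. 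Composing yields a regular embedding $Z_i\hookrightarrow \mathbb{P}(\mathscr{E}_{i-1})$ into a smooth ambient space, so setting $W_i := \mathbb{P}(\mathscr{E}_{i-1})$ closes the induction.

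The only nontrivial point, which is the step I expect to require the most care, is knowing that the bundle $\mathscr{E}_{i-1}$ producing $X_{i-1}$ as a zero-scheme can be found on the full ambient $W_{i-1}$ rather than merely on $Z_{i-1}$; this is what lets us form the projective bundle over a smooth variety and use Lemma~\ref{L1}(iii). That this bundle exists is exactly the content of Lemma~\ref{L1}(ii) applied to the extended sequence $X_{i-1}\hookrightarrow Z_{i-1}\hookrightarrow W_{i-1}$, so no further work is required beyond invoking parts (i)--(iii) of the lemma in sequence.
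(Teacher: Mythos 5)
Your proof is correct and follows essentially the same route as the paper's: induction on $i$, with the base case handled by Lemma~\ref{L1}(iii) applied to $X_0\hookrightarrow Y_0\hookrightarrow Z_0$, and the inductive step obtained by applying the lemma to $X_{i-1}\hookrightarrow Z_{i-1}\hookrightarrow W_{i-1}$ and composing $Z_i\hookrightarrow\widetilde{W}_{i-1}\hookrightarrow\mathbb{P}(\mathscr{E}_{i-1})$. Your explicit tracking of which parts (i)--(iii) of the lemma are invoked, and in particular the observation that $\mathscr{E}_{i-1}$ must live on all of $W_{i-1}$, only makes the paper's argument more transparent.
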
 
\begin{proof}
We use induction on $i$. For $i=1$ we take $j_1:Z_1\longrightarrow \mathbb{P}(\mathscr{E}_0)=W_1$, which is a regular embedding of $Z_1$ into a smooth variety by Lemma~\ref{L1}. Now assume the result holds for $i=k$ with $1\leq k<n$. Then again by Lemma~\ref{L1} we have the following diagram
\[
\xymatrix{
&Y_{k+1}\ar[r] \ar[d] &Z_{k+1}\ar[r] \ar[d]&\widetilde{W_k}\ar[r] \ar[d]& \mathbb{P}(\mathscr{E}_k)\ar[dl] \\
X_k\ar[r]&Y_k\ar[r] &Z_k\ar[r] &W_k,
}
\]
where the vertical arrows are all blowups along $X_k$, and $\mathscr{E}_k\to W_k$ is a vector bundle which admits a regular section whose zero-scheme is $X_k$. We then take the composition $Z_{k+1}\longrightarrow \mathbb{P}(\mathscr{E}_k)=W_{k+1}$ in the above diagram, which completes the proof.
\end{proof}

We note that if $X_i$ is smooth for $i=1,...,n-1$, $Z_i$ will be smooth for $i=1,...,n$, thus in such a case the embedding $i_n:Y_n\to Z_n$ is sufficient for adjunction. Claim~\ref{C1} then becomes relevant only if $X_i$ is singular for some $i$. In any case, by Claim~\ref{C1}, we may supplement the top of diagram \eqref{bd2} as follows
\begin{equation}\label{bd3}
\xymatrix{
&Y_{n}\ar[r]^{i_n} \ar[d]^{q_n} &Z_{n}\ar[r]^{j_n} \ar[d]^{p_n}&\widetilde{W_{n-1}}\ar[r]^{k_n} \ar[d]^{r_n}& \mathbb{P}(\mathscr{E}_{n-1})\ar[dl]^{\pi_n} \\
X_{n-1}\ar[r]&Y_{n-1}\ar[r] &Z_{n-1}\ar[r] &W_{n-1}.
}
\end{equation}
By Proposition~\ref{P1} we then arrive at the following
\begin{proposition}\label{P2}
Under the assumptions and notation of diagram \eqref{bd3}, we have
\[
i_{n*}\mathfrak{C}(Y_n)=\frac{\mathfrak{C}(\mathscr{O}(E_n))\mathfrak{C}(\mathcal{N}\otimes \mathscr{O}(-E_n))p_n^*\mathfrak{C}(W)}{\mathfrak{C}(\mathscr{O}_{Z_n})\mathfrak{C}(N_{Y_n})\mathfrak{C}(p_n^*N_{Z_{n-1}}\otimes \mathscr{O}(-E_n))\mathfrak{C}(\mathcal{N})}\cap [Y_n].
\]
where $W=\left.TW_{n-1}\right|_{Z_{n-1}}$, $N_{Y_n}=N_{Y_n}Z_n$, $N_{Z_{n-1}}=N_{Z_{n-1}}W_{n-1}$, and $\mathcal{N}\to Z_{n}$ is a restriction to $Z_n$ of a bundle on $W_{n-1}$ which restricts to $X_{n-1}$ as $N_{X_{n-1}}W_{n-1}$.
\end{proposition}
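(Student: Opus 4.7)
The strategy is to apply Proposition~\ref{P1} to the chain of regular embeddings $X_{n-1}\hookrightarrow Y_{n-1}\hookrightarrow W_{n-1}$, where $W_{n-1}$ is the smooth ambient variety furnished by Claim~\ref{C1}, and then to translate the resulting expression for $\mathfrak{C}(TY_n)$ on $Y_n$ into the form on $Z_n$ stated in Proposition~\ref{P2} via the projection formula. By Claim~\ref{C1}, $X_{n-1}$ is the zero-scheme of a regular section of $\mathscr{E}_{n-1}\to W_{n-1}$, so it is a complete intersection in $W_{n-1}$ with $\left.\mathscr{E}_{n-1}\right|_{X_{n-1}}=N_{X_{n-1}}W_{n-1}$. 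Hence in the setup of Proposition~\ref{P1} we may take $\mathcal{N}=\mathscr{E}_{n-1}$ and $\mathcal{Q}=0$; with $X=X_{n-1}$, $Y=Y_{n-1}$, $Z=W_{n-1}$ (so $\widetilde{Y}=Y_n$, $\widetilde{Z}=\widetilde{W_{n-1}}$, $p=r_n$, $q=q_n$), formula \eqref{gccf} expresses $\mathfrak{C}(TY_n)$ as a ratio whose numerator is $\mathfrak{C}(\mathscr{O}(E_{q_n}))\mathfrak{C}(r_n^*\mathscr{E}_{n-1}|_{Y_n}\otimes\mathscr{O}(-E_{q_n}))\mathfrak{C}(r_n^*TW_{n-1}|_{Y_n})$ and whose denominator is $\mathfrak{C}(\mathscr{O}_{Y_n})\mathfrak{C}(q_n^*N_{Y_{n-1}}W_{n-1}\otimes\mathscr{O}(-E_{q_n}))\mathfrak{C}(r_n^*\mathscr{E}_{n-1}|_{Y_n})$.

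To reach the form stated in Proposition~\ref{P2}, one splits the middle denominator factor using Lemma~\ref{L1}(i) applied to the chain $Y_{n-1}\hookrightarrow Z_{n-1}\hookrightarrow W_{n-1}$, which produces the exact sequence
\begin{equation*}
0\longrightarrow N_{Y_{n-1}}Z_{n-1}\longrightarrow N_{Y_{n-1}}W_{n-1}\longrightarrow \left.N_{Z_{n-1}}W_{n-1}\right|_{Y_{n-1}}\longrightarrow 0.
\end{equation*}
Tensoring with the line bundle $\mathscr{O}(-E_{q_n})$ preserves exactness, so multiplicativity of $\mathfrak{C}$ splits this factor as the product $\mathfrak{C}(q_n^*N_{Y_{n-1}}Z_{n-1}\otimes\mathscr{O}(-E_{q_n}))\cdot \mathfrak{C}(q_n^*\left.N_{Z_{n-1}}W_{n-1}\right|_{Y_{n-1}}\otimes\mathscr{O}(-E_{q_n}))$. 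By Lemma~\ref{L1}(v) the first piece equals $\mathfrak{C}(N_{Y_n})$, while the fiber-square commutativity of diagram~\eqref{bd3} identifies the second with $i_n^*\mathfrak{C}(p_n^*N_{Z_{n-1}}\otimes\mathscr{O}(-E_n))$. The same diagram yields $r_n^*\mathscr{E}_{n-1}|_{Y_n}=i_n^*\mathcal{N}$, $r_n^*TW_{n-1}|_{Y_n}=i_n^*p_n^*W$, and $\mathscr{O}(E_{q_n})=i_n^*\mathscr{O}(E_n)$, so that every factor other than $\mathfrak{C}(N_{Y_n})$ is the $i_n^*$-pullback of a class on $Z_n$.

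Combining these identifications gives $\mathfrak{C}(TY_n)=\mathfrak{C}(N_{Y_n})^{-1}\cdot i_n^*\Omega$, where $\Omega$ is exactly the ratio of $Z_n$-classes appearing on the right-hand side of Proposition~\ref{P2} after removing the $\mathfrak{C}(N_{Y_n})$ factor. Capping with $[Y_n]$ and applying the projection formula $i_{n*}(i_n^*\Omega\cap\gamma)=\Omega\cap i_{n*}\gamma$ then produces the stated identity. The main obstacle is purely notational: one must reconcile pullbacks of bundles defined at three distinct levels (the $Y$, $Z$, and $W$ towers) across two blowup stages, and verify that the line-bundle and normal-bundle identifications induced by diagram~\eqref{bd3} are fully compatible with its fiber-square structure; once this bookkeeping is organized, the proof reduces to Proposition~\ref{P1} together with the normal-bundle exact sequence.
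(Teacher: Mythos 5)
Your overall route is the one the paper itself intends: Proposition~\ref{P2} is meant to be read off from Proposition~\ref{P1} applied to the last stage of the tower with smooth ambient $W_{n-1}$, and your bookkeeping --- splitting $N_{Y_{n-1}}W_{n-1}$ via Lemma~\ref{L1}(i) applied to $Y_{n-1}\hookrightarrow Z_{n-1}\hookrightarrow W_{n-1}$, identifying $q_n^*N_{Y_{n-1}}Z_{n-1}\otimes\mathscr{O}(-E_{q_n})=N_{Y_n}Z_n$ by Lemma~\ref{L1}(v), using Lemma~\ref{L1}(iv) and the fiber squares of diagram \eqref{bd3} to rewrite every remaining factor as an $i_n^*$-pullback, and finishing with the projection formula under the paper's convention that classes supported on $Y_n$ capped with $[Y_n]$ are implicitly pushed forward --- is exactly the unwinding the paper leaves to the reader.

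There is, however, one genuine gap: your opening claim that Claim~\ref{C1} makes $X_{n-1}$ a complete intersection in $W_{n-1}$ with $\left.\mathscr{E}_{n-1}\right|_{X_{n-1}}=N_{X_{n-1}}W_{n-1}$, so that one may take $\mathcal{N}=\mathscr{E}_{n-1}$ and $\mathcal{Q}=0$ in Proposition~\ref{P1}. Claim~\ref{C1} only provides a bundle $\mathscr{E}_{n-1}$ with a section whose zero-scheme is $X_{n-1}$; in the paper's usage (see Lemma~\ref{L1}(ii)) the rank of $\mathscr{E}_{n-1}$ may exceed the codimension of $X_{n-1}$ in $W_{n-1}$, in which case $N_{X_{n-1}}W_{n-1}$ is merely a subbundle of $\left.\mathscr{E}_{n-1}\right|_{X_{n-1}}$ and $\mathcal{Q}\neq 0$. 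This distinction is precisely why Proposition~\ref{P2} is stated with a separate bundle $\mathcal{N}$ restricting to $N_{X_{n-1}}W_{n-1}$ rather than with $\mathscr{E}_{n-1}$ itself; with your assumption you prove only the special case in which $X_{n-1}$ is a complete intersection in $W_{n-1}$ (which does cover the resolutions of Table~2, where the $X_i$ are smooth and in fact $W_i=Z_i$). To obtain the statement in its stated generality, apply formula \eqref{gccf} keeping $\mathcal{N}$ and $\mathcal{Q}$ as in Proposition~\ref{P1}, and note that, by multiplicativity of $\mathfrak{C}$ applied to the twist by $\mathscr{O}(-E_{q_n})$ of the sequence $0\to\mathcal{N}\to\mathscr{E}_{n-1}\to\mathcal{Q}\to 0$ (the same implicit compatibility of $\mathcal{N}$ and $\mathcal{Q}$ with $\mathscr{E}_{n-1}$ that the paper already uses), the numerator factor $\mathfrak{C}(r_n^*\mathscr{E}_{n-1}\otimes\mathscr{O}(-E_{q_n}))$ cancels against the denominator factor $\mathfrak{C}(r_n^*\mathcal{Q}\otimes\mathscr{O}(-E_{q_n}))$ leaving $\mathfrak{C}(r_n^*\mathcal{N}\otimes\mathscr{O}(-E_{q_n}))$; the remainder of your argument then goes through verbatim and yields the formula of Proposition~\ref{P2} with its $\mathcal{N}$ and the additional denominator factor $\mathfrak{C}(\mathcal{N})$ as stated.
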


We now recall $\iota:Y_0\to Z_0$ denotes the inclusion of the singular variety $Y_0$ in the smooth ambient space $Z_0$. Now since $\rho:Y_n\to Y_0$ is a crepant resolution, we then have 
\[
\iota_*\mathfrak{C}^{\text{str}}(Y_0)=\tau_*(i_{n*}\mathfrak{C}(Y_n)),
\]
where $\tau=p_1\circ \cdots \circ p_n$. As such, to complete the computation of $\iota_*\mathfrak{C}^{\text{str}}(Y_0)$, we now prove a lemma which suffices to effectively compute $\tau_*$ of any class in $A_*Z_n$, which essentially boils down to computing $p_{i*}E_i^k$ for $i=1,...,n$, where $E_i^k$ denotes the $k$-fold intersection product of the exceptional divisor $E_i$ of $p_i:Z_i\to Z_{i-1}$ with itself. 

The formula we now prove for pushing forward powers of the exceptional divisor of a blowup is a direct generalization of a formula appearing in \cite{FvH}. Whereas the formula appearing in \cite{FvH} is for blowups along smooth complete intersections cut out by smooth divisors, we generalize the situation to blowups along possibly singular complete intersections, and no smoothness assumption is made on the divisors which cut out the complete intersection.  

\begin{lemma}\label{L2}
Let $\iota:X\to Z$ be a (possibly singular) complete intersection of codimension $d$ with $Z$ smooth, $p:\widetilde{Z}\to Z$ be the blowup of $Z$ along $X$, and let $E $ denote the exceptional divisor of $p$. Then if $U_1,\ldots, U_d$ are classes of divisors which cut out $X$ in $Z$, then
\begin{equation}\label{ped}
p_*E^k=\sum_{i=1}^d\left(\prod_{j\neq i}\frac{U_j}{U_j-U_i}\right)U_i^k.
\end{equation}
In particular, if $g(E) = \sum_{k=0}^{\infty}  a_k E^k$ is a formal power series in $E$ with coefficients $a_k\in A_*\widetilde{Z}$ and $C_i = \prod_{j \neq i} U_j/(U_j-U_i)$, then
\begin{equation}\label{plugitin}
f_*g(E) =C_1 g(U_1) + \cdots +C_d g(U_d).
\end{equation}
\end{lemma}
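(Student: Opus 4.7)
The plan is to compute $p_*E^k$ via the standard projective-bundle description of the exceptional divisor and a Segre-class calculation, then match the result to the stated Lagrange-type sum through a partial-fractions identity. Since $\iota\colon X\hookrightarrow Z$ is a complete intersection cut out by divisors with classes $U_1,\ldots,U_d$, the bundle $\mathcal{N}=\bigoplus_i\mathscr{O}_Z(U_i)$ admits a regular section whose zero scheme is $X$, so the normal bundle factors as $N_XZ=\mathcal{N}|_X=\bigoplus_i\mathscr{O}_X(U_i|_X)$, the exceptional divisor is the projective bundle $\pi\colon E=\mathbb{P}(N_XZ)\to X$, and the standard identification $\mathscr{O}_{\widetilde{Z}}(E)|_E=\mathscr{O}_E(-1)$ gives $E|_E=-h$ for $h=c_1(\mathscr{O}_E(1))$.

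The self-intersection formula yields $E^k=(-1)^{k-1}i_{E*}(h^{k-1})$ for $k\geq 1$, where $i_E\colon E\hookrightarrow\widetilde{Z}$, so that $p_*E^k=(-1)^{k-1}\iota_*\pi_*(h^{k-1})$. Invoking the projective-bundle pushforward $\pi_*(h^m)=s_{m-d+1}(N_XZ)\cap[X]$ (vanishing for $m<d-1$), together with $s(N_XZ)=\prod_i(1+U_i|_X)^{-1}$ and hence $s_j(N_XZ)=(-1)^jh_j(U_1,\ldots,U_d)|_X$ with $h_j$ the complete homogeneous symmetric polynomial, and then applying the projection formula along $\iota$ with $\iota_*[X]=U_1\cdots U_d$, I obtain $p_*E^k=0$ for $1\leq k<d$ and
\begin{equation*}
p_*E^k=(-1)^{d-1}\,U_1\cdots U_d\cdot h_{k-d}(U_1,\ldots,U_d)\quad\text{for }k\geq d.
\end{equation*}

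To reconcile this with \eqref{ped}, I would use the partial-fraction decomposition $\prod_i(1-U_it)^{-1}=\sum_i\frac{U_i^{d-1}}{\prod_{j\neq i}(U_i-U_j)}\cdot\frac{1}{1-U_it}$; equating coefficients of $t^k$ produces $h_k(U)=\sum_iU_i^{d-1+k}/\prod_{j\neq i}(U_i-U_j)$, and multiplying by $(-1)^{d-1}U_1\cdots U_d$ while absorbing the sign via $\prod_{j\neq i}(U_i-U_j)=(-1)^{d-1}\prod_{j\neq i}(U_j-U_i)$ converts this exactly into $\sum_iU_i^k\prod_{j\neq i}U_j/(U_j-U_i)$, giving \eqref{ped} for $k\geq d$. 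For $1\leq k<d$ the same sum vanishes by classical Lagrange interpolation: the interpolating polynomial of $x^k$ at $U_1,\ldots,U_d$ is $x^k$ itself, whose value at $x=0$ is $0$. Formula \eqref{plugitin} then follows by linearity of pushforward, with the $k=0$ case corresponding to $p_*[\widetilde{Z}]=[Z]$ matching the identity $C_1+\cdots+C_d=1$ (Lagrange interpolation of the constant polynomial $1$ at $x=0$).

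The main obstacle is purely book-keeping: the $(-1)^{k-d}$ arising from the Segre-class expansion and the $(-1)^{d-1}$ arising from flipping $\prod_{j\neq i}(U_i-U_j)$ to $\prod_{j\neq i}(U_j-U_i)$ must collapse correctly to the single factor $(-1)^{d-1}$ in the final polynomial identity, and the vanishing of both sides in the intermediate range $1\leq k<d$ must be justified separately, by Segre-class vanishing on the left and by Lagrange interpolation on the right.
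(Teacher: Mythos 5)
Your argument is correct, and it reaches the paper's formula by a route that is close in spirit but packaged differently. The paper works with the Segre class of $E$ in $\widetilde{Z}$ all at once: it pushes forward the generating series $E-E^2+E^3-\cdots = \tilde{\iota}_*s(E,\widetilde{Z})$, invokes the birational invariance of Segre classes (Proposition~4.2(a) of \cite{IT}) to identify this with $\iota_*s(X,Z)=\frac{U_1\cdots U_d}{(1+U_1)\cdots(1+U_d)}$, and then extracts $p_*E^k$ as a coefficient, leaving the partial-fraction step as ``elementary manipulations.'' You instead compute each $p_*E^k$ directly from the structure $E=\mathbb{P}(N_XZ)$ with $\mathscr{O}_{\widetilde{Z}}(E)|_E=\mathscr{O}_E(-1)$, using the self-intersection formula and $\pi_*(h^m)=s_{m-d+1}(N_XZ)\cap[X]$, and only then match with \eqref{ped} via the partial-fraction/Lagrange identity; this amounts to unpacking the same Segre-class input (for a regularly embedded, possibly singular $X$, the normal cone is the bundle $N_XZ=\bigoplus_i\mathscr{O}(U_i)|_X$, so your identifications are valid) rather than citing birational invariance as a black box. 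Your sign bookkeeping is right ($(-1)^{k-1}$ from the self-intersection times $(-1)^{k-d}$ from the Segre expansion gives the single $(-1)^{d-1}$ that cancels against $\prod_{j\neq i}(U_j-U_i)=(-1)^{d-1}\prod_{j\neq i}(U_i-U_j)$), and you gain two points of explicitness the paper glosses over: the vanishing of both sides of \eqref{ped} in the range $1\leq k<d$, and the $k=0$ normalization $p_*[\widetilde{Z}]=[Z]$ versus $C_1+\cdots+C_d=1$. One small caveat, shared with the paper's own statement of \eqref{plugitin}: passing $p_*$ through the coefficients $a_k$ requires them to be pulled back from $Z$ (projection formula), not arbitrary elements of $A_*\widetilde{Z}$, which is how the formula is used in practice.
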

\begin{proof}
The Segre class of $E$ in $\widetilde{Z}$, denoted $s(E,\widetilde{Z})$, is by definition given by
\[
s(E,\widetilde{Z})=c(N_E\widetilde{Z})^{-1}\cap [E] \in A_*E,
\]
so its pushforward to $\widetilde{Z}$ is then given by
\[
\tilde{\iota}_*s(E,\widetilde{Z})=c(\mathscr{O}(E))^{-1}\cap [E]=E-E^2+E^3-\cdots,
\]
where $\tilde{\iota}:E\to \widetilde{Z}$ denotes the inclusion. Now consider the following fiber square
\[
\xymatrix{
E \ar[d]_q \ar[r]^{\tilde{\iota}} & \widetilde{Z} \ar[d]^p \\
X \ar[r]^{\iota} & Z.
}
\]
By the birational invariance of Segre classes (namely, Proposition~4.2 (a) in \cite{IT}), 
\[
q_*s(E,\widetilde{Z})=s(X,Z),
\]
thus
\[
p_*\tilde{\iota}_*s(E,\widetilde{Z})=\iota_*q_*s(E,\widetilde{Z})=\iota_*s(X,Z)=c(\mathcal{N})^{-1}\cap [X],
\] 
where $\mathcal{N}\to Z$ is a vector bundle such that $\left.\mathcal{N}\right|_{X}=N_XZ$. Now if $U_1,\ldots, U_d$ denote classes of divisors which cut out $X$ in $Z$, so that $[X]=U_1\cdots U_d\in A_*Z$, then
\[
c(\mathcal{N})^{-1}\cap [X]=\frac{U_1\cdots U_d}{(1+U_1)\cdots (1+U_d)},
\]
where $1/(1+U_i)$ is shorthand for the formal series expansion $1-U_i+U_i^2-\cdots$. Putting things together then yields
\[
p_*(E-E^2+E^3-\cdots)=\frac{U_1\cdots U_d}{(1+U_1)\cdots (1+U_d)},
\]
so that 
\[
p_*E^k=(-1)^{k+1}\cdot [t^k]\frac{t^dU_1\cdots U_d}{(1+tU_1)\cdots (1+tU_d)}
\] 
where if $h(t)=\sum_i b_it^i$ is a formal power series in $t$ then $[t^m]h(t)=b_m$. Equation \eqref{ped} then follows by elementary manipulations of formal power series. Now if $g(E) = \sum_{k=0}^{\infty}  a_k E^k$ is a formal power series in $E$ with coefficients $a_k\in A_*\widetilde{Z}$ and $C_i = \prod_{j \neq i} U_j/(U_j-U_i)$, then
\[
f_*g(E) =\sum_{k=0}^{\infty}a_kf_*(E^k) =  \sum_{k=0}^{\infty}  a_k (C_1 U_1^n + \cdots +C_d U_d^n) = C_1 g(U_1) + \cdots +C_d g(U_d),
\]
thus completing the proof.
\end{proof}

We note that in the case $\iota:X\to Z$ is a smooth complete intersection cut out by smooth divisors, formula \eqref{plugitin} is referred to as `Theorem~1.8' in \cite{EJK}, but in any case, formula \eqref{plugitin} follows immediately from the pushforward formula \eqref{ped}.


\section{Weierstrass fibrations in $F$-theory}\label{WMFT}
$F$-theory is a geometrization of the $\text{SL}_2(\Z)$ symmetry of non-perturbative type-IIB string theory \cite{VFT}. In particular, a type-IIB compactification of $10-2n$ spacetime dimensions on a compact $n$-fold $B$ comes with a fundamental $\text{SL}_2(\Z)$-invariant complex scalar field $\tau$ referred to as the \emph{axio-dilaton}, which is given by
\[
\tau=C_{(0)}+ie^{-\phi},
\]   
where $C_{(0)}$ is the RR-scalar field and $\phi$ is coming from the NS-NS sector, which is related to the string coupling parameter $g_s$ by the equation $e^{\phi}=g_s$. In perturbative type-IIB string theory, $\tau$ is taken to be constant and $B$ is assumed to be Calabi-Yau in order to preserve super-symmetry. In its non-perturbative regime, the axio-dilaton $\tau$ of type-IIB is assumed to be a varying complex scalar field over $B$, and $B$ is then no longer required to be Calabi-Yau as the non-constant behavior of $\tau$ preserves super-symmetry. In $F$-theory, the $\text{SL}_2(\Z)$-invariant, non-constant complex scalar field $\tau$ of non-perturbative type-IIB is then identified with the complex structure parameter of an \emph{actual} family of elliptic curves, varying over the $n$-fold $B$ according to the behavior of the axio-dilaton $\tau$. This geometric viewpoint of non-perturbative type-IIB is then encapsulated in an elliptic fibration $Y\to B$, whose total space $Y$ is a Calabi-Yau $(n+1)$-fold.

The $F$-theoretic geometrization of type-IIB via an elliptic fibration $Y\to B$ has been a useful tool for probing the non-perturbative aspects of type-IIB string compactifications. In particular, using Weierstrass fibrations along with Tate's algorithm, one may geometrically engineer non-Abelian gauge theories \cite{GSEGS}\cite{TAFTK}. For this, one starts with a Weierstrass fibration $Y\to B$ in Tate form, so that the total space $Y$ is a hypersurface in a $\mathbb{P}^2$-bundle $\pi:\mathbb{P}(\mathscr{E})\to B$ given by
\begin{equation}\label{tf}
Y:(y^2z + a_1xyz + a_3yz^2 = x^3 + a_2x^2z + a_4xz^2 + a_6z^3)\subset \mathbb{P}(\mathscr{E}),
\end{equation}
where $\mathscr{E}=\mathscr{O}_B\oplus \mathscr{L}^2\oplus \mathscr{L}^3$ and $\mathscr{L}\to B$ is a line bundle referred to as the \emph{fundamental line bundle} of $Y\to B$. Taking $x$ to be a section of $\mathscr{O}(1)\otimes \pi^*\mathscr{L}^2$, $y$ to be a section of $\mathscr{O}(1)\otimes \pi^*\mathscr{L}^3$, $z$ to be a section of the tautological bundle $\mathscr{O}(1)$ and $a_i$ to be a section of $\pi^*\mathscr{L}^i$ then identifies $Y$ with the zero-scheme of a section of $\mathscr{O}(3)\otimes \pi^*\mathscr{L}^6$. The canonical class $K_Y$ is then trivial if and only if $\mathscr{L}=\mathscr{O}(-K_B)$, so in the context of string compactifications $\mathscr{L}$ is always taken to be the anti-canonical bundle of $B$. The discriminant of $Y$ is then a divisor in $B$ which parametrizes the singular fibers of $Y$, which is given by
\[
\Delta_Y:(4F^3+27G^2=0)\subset B,
\]
where 
\[
\begin{cases}
F=\frac{-1}{48}(b_2^2-24b_4) \\
G=\frac{-1}{864}(36b_2b_4 - b_2^3 - 216b_6) \\
b_2 = a_1^2 + 4a_2 \\
b_4 = a_1a_3 + 2a_4 \\
b_6 = a_3^2 + 4a_6.
\end{cases}
\]
Moreover, the Weierstrass equation for $Y$ is then given by 
\[
Y:(y^2z=x^3+Fxz^2+Gz^3)\subset \mathbb{P}(\mathscr{E}).
\]

A gauge group $\mathcal{G}_Y$ is then associated with $Y$ as follows (see \cite{EJK} for further details). By Tate's algorithm \cite{Tate}, one may perturb the coefficient sections $a_i$ in such a way that a particular reducible singular fiber $\mathfrak{f}_Y$ appears generically over an irreducible component $S$ of the discriminant $\Delta_Y$ after a resolution of singularities. In such as case, the equation of $\Delta_Y$ necessarily takes the form $s^kt=0$, where $s$ is a regular section of $\mathscr{O}(S)$, and the singular locus of $Y$ is given by $\{x=y=s=0\}\subset \mathbb{P}(\mathscr{E})$. The dual graph of $\mathfrak{f}_Y$ is then an affine Dynkin diagram associated with a Lie algebra $\mathfrak{g}$. The gauge group $\mathcal{G}_Y$ is then given by
\[
\mathcal{G}_Y=\frac{\text{exp}(\mathfrak{g}^{\vee})}{\text{MW}_{\text{tor}}(\varphi)}\times U(1)^{\text{rkMW}(\varphi)},
\]  
where $\varphi:Y\to B$ denotes the projection associated with the elliptic structure of $Y$ as given by \eqref{tf}, and $\text{MW}(\varphi)$ denotes the Mordell-Weil group of rational sections of $\varphi$ (whose torsion subgroup is denoted $\text{MW}_{\text{tor}}(\varphi)$). We note that it is possible for two distinct Weierstrass fibrations $Y\to B$ and $Y'\to B$ with distinct $\mathfrak{f}_Y$ and $\mathfrak{f}_{Y'}$ to give rise to the same gauge group, so that it is not necessarily the case that $\mathcal{G}_Y\neq \mathcal{G}_{Y'}$. 

In this note, we consider 14 different families of Weierstrass fibrations $Y\to B$ for which $\mathcal{G}_Y$ simple. In \cite{EJK}, crepant resolutions of the Weierstrass fibrations we consider were constructed over an unspecified base $B$ of arbitrary dimension, and moreover, the resolutions do not require that the fundamental line bundle $\mathscr{L}$ is the anti-canonical line bundle of $B$ (so that the Calabi-Yau case is recovered by setting $\mathscr{L}=\mathscr{O}(-K_B)$). We now give the equations for such Weierstrass fibrations, along with the details of the crepant resolutions constructed in \cite{EJK}.

\begin{table}[hbt]\label{t1}
\begin{center}
\begin{tabular}{|c|c|c|c|}
\hline
$Y$ & $\mathcal{G}_Y$ \\
\hline
$y^2z=x^3 + a_{4,1}sxz^2 + a_{6,2}s^2z^3$ & SU(2) \\
\hline
$y^2z + a_{3,1}syz^2=x^3 + a_{4,2}s^2xz^2 + a_{6,3}s^3z^3$ & SU(3) \\
\hline
$y^2z + a_1xyz=x^3 + a_{2,1}sx^2z + a_{4,2}s^2xz^2 + a_{6,4}s^{4}z^3$ & SU(4) \\
\hline
$y^2z + a_1xyz + a_{3,2}s^2yz^2=x^3 + a_{2,1}sx^2z + a_{4,3}s^{3}xz^2 + a_{6,5}s^{5}z^3$ & SU(5) \\
\hline
$y^2z=x^3 + a_2x^2z + a_{4,3}s^{3}xz^2 + a_{6,5}s^{5}z^3$ & USp(4) \\
\hline
 $y^2z=x(x^2 + a_2xz + sxz^2)$ & SO(3) \\
\hline
$y^2z=(x^3 + a_2x^2z + s^2xz^2)$ & SO(5) \\
\hline
 $y^2z + a_1xyz=x^3 + sx^2z + s^2xz^2$ & SO(6) \\
\hline
 $y^2z=x^3 + a_{2,1}sx^2z + a_{4,2}s^2xz^2 + a_{6,4}s^4z^3$ & Spin(7) \\
\hline
$y^2z=x^3 + a_{4,2}s^2xz^2 + a_{6,3}s^3z^3$ & $G_2$ \\
\hline
$y^2z=x^3 + a_{4,3}s^3xz^2 + a_{6,4}s^4z^3$ & $F_4$ \\
\hline
$y^2z + a_{3,2}s^2yz^2=x^3 + a_{4,3}s^3xz^2 + a_{6,5}s^5z^3$ & $E_6$ \\
\hline
$y^2z=x^3 + a_{4,3}s^3xz^2 + a_{6,5}s^5z^3$ & $E_7$ \\
\hline
$y^2z=x^3 + a_{4,4}s^4xz^2 + a_{6,5}s^5z^3$ & $E_8$ \\
\hline
\end{tabular}
\end{center}
\caption{Equations for the Weierstrass fibrations we consider along with the associated gauge groups.}
\end{table}

For each Weierstrass fibration listed in Table~1, a crepant resolution was constructed in \cite{EJK} by blowing up the projective bundle $\mathbb{P}(\mathscr{E})$ along smooth complete intersections and then taking the proper transform of $Y$ along the blowups. For example in the SU(3), $G_2$ and USp(4) cases, a crepant resolution is obtained by two blowups. The first blowup $Z_1\to Z_0=\mathbb{P}(\mathscr{E})$ is along its singular locus $\{x=y=s=0\}\subset Z_0$ with exceptional divisor $E_1$, and the second blowup is along $\{y=e_1\}\subset Z_1$, where $e_1$ is a section of $\mathscr{O}(E_1)$ and $y$ denotes the pullback of the section $y$ under $Z_1\to Z_0$. We then summarize the resolution procedure with the notation 
\[
((x,y,s),(y,e_1)),
\]
where the first entry $(x,y,s)$ denotes the ideal along which the first blowup takes place, and the the second entry $(y,e_1)$ denotes the ideal along which the second blowup takes place. Such notation will then be used to summarize each resolution we consider, so that if $Z_i\to Z_{i-1}$ denotes the $i$-th blowup with exceptional divisor $E_i$, then $e_i$ denotes a section of $\mathscr{O}(E_i)$ (we also elide the difference in notation between a section of a line bundle and its pullback via $Z_i\to Z_{i-1}$). 

\begin{table}[hbt]\label{T2}
\begin{center}
\begin{tabular}{|c|c|c|c|}
\hline
Resolution & $\mathcal{G}_Y$ \\
\hline
$(x,y,s)$ & SU(2) \\
\hline
$((x,y,s),(y,e_1))$ & SU(3) \\
\hline
$((x,y,s),(y,e_1),(x,e_2))$ & SU(4) \\
\hline
$((x,y,s),(x,y,e_1),(y,e_1),(y,e_2))$ & SU(5) \\
\hline
$((x,y,s),(y,e_1))$ & USp(4) \\
\hline
 $(x,y)$ & SO(3) \\
\hline
$((x,y,s),(x,y,e_1))$ & SO(5) \\
\hline
 $((x,y,s),(y,e_1),(x,e_2))$ & SO(6) \\
\hline
$((x,y,s),(y,e_1),(x,e_2))$ & Spin(7) \\
\hline
$((x,y,s),(y,e_1))$ & $G_2$ \\
\hline
$((x,y,s),(y,e_1),(x,e_2),(e_2,e_3))$ & $F_4$ \\
\hline
$((x,y,s),(y,e_1),(x,e_2),(e_2,e_3),(y,e_3),(y,e_4))$ & $E_6$ \\
\hline
$((x,y,s),(y,e_1),(x,e_2),(y,e_3),(e_2,e_3),(e_2,e_4),(e_4,e_5))$ & $E_7$ \\
\hline
$((x,y,s),(y,e_1),(x,e_2),(y,e_3),(e_2,e_3),(e_4,e_5),(e_2,e_4,e_6),(e_4,e_7)) $ & $E_8$ \\
\hline
\end{tabular}
\end{center}
\caption{Resolution procedure for each Weierstrass model.}
\end{table}

\section{Stringy Hirzebruch class computation for SO(5) fibrations}\label{SO(5)}
Let $B$ be a smooth compact variety of arbitrary dimension, and let $\varphi:Y\to B$ be the Weierstrass fibration in Table~1 with $\mathcal{G}_Y=\text{SO(5)}$. We now explicitly compute $T_y^{str}(Y)$, which will serve as a representative case for all Weierstrass fibrations in Table~1.

We now set $Y_0=Y$ and $Z_0=\mathbb{P}(\mathscr{O}_B\oplus \mathscr{L}^2\oplus \mathscr{L}^3)$, where $\mathscr{L}\to B$ is the fundamental line bundle. In the notation of \S\ref{CSCC}, we have the following diagram
\begin{equation}\label{bd5}
\xymatrix{
                    & Y_2   \ar[d]_{q_2} \ar[r]^{i_2}  & Z_2 \ar[d]^{p_2}              \\
X_1 \ar[r]       &Y_1 \ar[r]^{i_1} \ar[d]_{q_1}  &Z_1 \ar[d]^{p_1}  \\
X_0      \ar[r]  & Y_0 \ar[r]^{i_0} \ar[d]_{\varphi}     & Z_0 \ar[dl]^{\pi}                \\
                    &B & \\
}
\end{equation}
so that $p_i:Z_i\to Z_{i-1}$ is the blowup of $Z_{i-1}$ along $X_{i-1}$ for $i=1,2$. By Table~2 we have $X_0$ and $X_1$ are smooth complete intersections given by
\[
X_0:(x=y=s=0)\subset Z_0 \quad \quad X_1:(x=y=e_1=0)\subset Z_1.
\]
Since $X_0$ and $X_1$ are both smooth, $Z_1$ and $Z_2$ are both smooth as well, so that the crepant resolution $Y_2\to Y_0$ is a smooth hypersurface in $Z_2$. In particular, in the notation of Claim~\ref{C1} and Proposition~\ref{P2}, we have $W_1=Z_1$ and $W_2=Z_2$, so that both $N_{Z_1}W_1$ and $N_{Z_2}W_2$ are both rank zero vector bundles. Now let $Q(z)$ be the characteristic power series for $T_y$ given by \eqref{npsd}, and let $E_i$ denote the class of the exceptional divisor of $p_i:Z_i\to Z_{i-1}$. Then the divisor class of $Y_2$ in $Z_2$ is given by $[Y_2]=3H+6L-2E_1-2E_2$, where $H$ is the divisor associated with $\mathscr{O}(1)$ on $Z_0$, and $L=c_1(\mathscr{L})$. By Proposition~\ref{P2} we then have
\[
i_{2*}T_y(Y_2)=\mathfrak{q}_2(E_2)p_2^*T_y(Z_1),
\] 
where
\[
\mathfrak{q}_2(E_2)=\frac{Q(E_2)Q(H+2L-E_2)Q(H+3L-E_2)Q(E_1-E_2)\cdot[Y_2]}{Q(3H+6L-2E_1-2E_2)Q(H+2L)Q(H+3L)Q(E_1)},
\]
where we recall the classes of $\{x=0\}$ and $\{y=0\}$ are given by $H+2L$ and $H+3L$ respectively, and $S$ denotes the class of $\{s=0\}$. By the projection formula we then have
\begin{equation}  
p_{2*}\left(i_{2*}T_y(Y_2)\right)=p_{2*}(\mathfrak{q}_2(E_2))T_y(Z_1).
\end{equation}
By Lemma~\ref{L2} we have
\begin{equation}\label{pf2}
p_{2*}(\mathfrak{q}_2(E_2))=\sum_{n=1}^3C_n\mathfrak{q}_2(U_n),
\end{equation}
where $U_1=H+2L$, $U_2=H+3L$, $U_3=E_1$, and $C_n=\prod_{m \neq n} U_m/(U_m-U_n)$. Proposition~\ref{P1} then yields
\[
T_y(Z_1)=\frac{Q(E_1)Q(H+2L-E_1)Q(H+3L-E_1)Q(S-E_1)}{Q(H+2L)Q(H+3L)Q(S)}p_1^*T_y(Z_0), 
\]
so that 
\[
p_{2*}\left(i_{2*}T_y(Y_2)\right)=\mathfrak{q}_1(E_1)p_1^*T_y(Z_0),
\]
where $\mathfrak{q}_1(E_1)$ is given by
\[
\mathfrak{q}_1(E_1)=\left(\sum_{n=1}^3C_n\mathfrak{q}_2(U_n)\right)\frac{Q(E_1)Q(H+2L-E_1)Q(H+3L-E_1)Q(S-E_1)}{Q(H+2L)Q(H+3L)Q(S)}.
\]

Now let $\rho$ denote the crepant resolution $p_1\circ p_2:Y_2\to Y_0$. Then
\[
i_{0*}T_y^{\text{str}}(Y_0)=\rho_*(i_{2*}T_y(Y_2))=p_{1*}p_{2*}(i_{2*}T_y(Y_2))=p_{1*}(\mathfrak{q}_1(E_1)p_1^*T_y(Z_0)),
\]
so that by the projection formula and Lemma~\ref{L2} we have
\[
i_{0*}T_y^{\text{str}}(Y_0)=\left(\sum_{n=1}^3D_n\mathfrak{q}_1(V_n)\right)T_y(Z_0),
\]
where $V_1=H+2L$, $V_2=H+3L$, $V_3=S$, and $D_n=\prod_{m \neq n} V_m/(V_m-V_n)$. Moreover, by B.5.8 in \cite{IT} we have
\[
T_y(Z_0)=Q(H)Q(H+2L)Q(H+3L)\pi^*T_y(B),
\]
where we recall $\pi$ denotes the projection $Z_0=\mathbb{P}(\mathscr{O}_B\oplus \mathscr{L}^2\oplus \mathscr{L}^3)\to B$. As such, for the SO(5) model $Y\to B$ we have
\begin{equation}\label{FF}
\iota_*T_y^{\text{str}}(Y)=\left(\sum_{n=1}^3D_n\mathfrak{q}_1(V_n)\right)Q(H)Q(H+2L)Q(H+3L)\pi^*T_y(B),
\end{equation}
where $\iota:Y\hookrightarrow \mathbb{P}(\mathscr{O}_B\oplus \mathscr{L}^2\oplus \mathscr{L}^3)$ denotes the inclusion.

\section{Generating functions for $\chi_y^{\rm{str}}$}\label{GF}
Let $\varphi:Y\to B$ be a Weierstrass fibration over an arbitrary smooth compact variety $B$, so that we have the following diagram
\begin{equation}\label{efd}
\xymatrix{
Y \ar[d]_{\varphi} \ar[r]^{\iota} & \mathbb{P}(\mathscr{E}) \ar[dl]^{\pi} &\\
B, &
}
\end{equation}
where $\mathscr{E}=\mathscr{O}_B\oplus \mathscr{L}^2\oplus \mathscr{L}^3$. The stringy $\chi_y$-characteristic is then given by
\begin{equation}\label{chiy}
\chi_y^{\text{str}}(Y)=\int_Y T_y^{\text{str}}(Y),
\end{equation}
where $\int$ denotes taking the degree of the zero-dimensional component of the class $T_y^{\text{str}}(Y)$. A consequence of the functoriality of the proper pushforward associated with proper maps is that the degree of a zero-dimensional class is invariant under proper pushforwards (see \cite{IT} Definition~1.4), so that
\begin{equation}\label{pf1}
\int_Y T_y^{\text{str}}(Y)=\int_B \varphi_*T_y^{\text{str}}(Y),
\end{equation}
where we recall $\varphi:Y\to B$ is the proper surjective map which endows $Y$ with the structure of an elliptic fibration. By equations \eqref{chiy} and \eqref{pf1} we then have
\[
\chi_y^{\text{str}}(Y)=\int_B \varphi_*T_y^{\text{str}}(Y),
\]
so that we can obtain a formula for $\chi_y^{\text{str}}(Y)$ in terms of invariants of $B$ by explicitly computing $\varphi_*T_y^{\text{str}}(Y)$. As in equation \eqref{FF} which represents the case $\mathcal{G}_Y=\text{SO}(5)$, for all Weierstrass fibrations we consider we have
\begin{equation}\label{shcf1}
\iota_*T_y^{\text{str}}(Y)=\mathfrak{q}_Y(L,S,H)\pi^*T_y(B),
\end{equation}
where $\mathfrak{q}_Y$ is a rational expression in $Q(L)$, $Q(S)$ and $Q(H)$ ($Q$ is the characteristic power series of $T_y$). By diagram \eqref{efd} we then have
\[
\varphi_*T_y^{\text{str}}(Y)=\pi_*\iota_*T_y^{\text{str}}(Y)=\pi_*(\mathfrak{q}_Y(L,S,H)\pi^*T_y(B))=\pi_*(\mathfrak{q}_Y(L,S,H))T_y(B),
\]
so that computing $\varphi_*T_y^{\text{str}}(Y)$ amounts to computing $\pi_*\mathfrak{q}(L,S,H)$. For this, we use Theorem~4.1 in \cite{ECRV}, which is a general formula for $\pi_*$.\footnote{A Maple implementation for computing $\pi_*$ may be found under the program name `PiStar' in www.math.fsu.edu/$\sim$hoeij/files/SHC/MapleCode} In particular, for $\mathcal{G}_Y=\text{SU}(2)$ we have
\[
\pi_*\mathfrak{q}_Y(L,S,H)= 1-2y+\frac{y+1}{y+s} \left(y+sl\frac{y^2s-(l^4+(s-1)l^2-s)y-l^4}{l^6+s^2y} \right),
\]
where $l=e^{(1+y)L}$ and $s=e^{(1+y)S}$.

As in the $\text{SU}(2)$ case, for a general Weierstrass fibration $\varphi:Y\to B$ as in Table~1, the pushforward  $\pi_*\mathfrak{q}_Y(L,S,H)$ (as in equation \eqref{shcf1}) will be a rational expression in $l=e^{(1+y)L}$ and $s=e^{(1+y)S}$ which we denote by $\mathcal{Q}_Y(l,s)$, so that
\[
\varphi_*T_y^{\text{str}}(Y)=\mathcal{Q}_Y(l,s)T_y(B).
\]
Now denote by $\mathcal{Q}_Y(t)$ the formal power series in $t$ obtained by evaluating $\mathcal{Q}_Y(l,s)$ at $(l',s')$, where $l'=e^{(1+y)tL}$ and $s'=e^{(1+y)tS}$, and given a formal power series $f(t)=b_0+b_1t+\cdots$, let $[t^k]f(t)=b_k$. It then follows that if $\text{dim}(B)=d$, then $\chi_y^{\text{str}}(Y)$ is given by
\[
\chi_y^{\text{str}}(Y)=[t^d]\left(\mathcal{Q}_Y(t)\prod_{i=1}^dQ(t\lambda_i)\right),
\]
where $\lambda_1,...,\lambda_d$ are the Chern roots of the tangent bundle of $B$ and $Q(z)$ is the characteristic power series of $T_y$, which is given by
\begin{equation}\label{cps}
Q(z)=\frac{z(1+y)}{1-e^{-z(1+y)}}-yz.
\end{equation}
Now let $\chi^{\text{str}}_y(Y;t)=\sum_k a_kt^k$ be the associated generating function for $\chi_y^{\text{str}}(Y)$ with respect to the dimension of $B$, so that
\begin{equation}\label{gscoeff}
[t^d]\chi^{\text{str}}_y(Y;t)=[t^d]\left(\mathcal{Q}_Y(t)\prod_{i=1}^dQ(t\lambda_i)\right).
\end{equation}

Now before deriving a closed form expression for $\chi^{\text{str}}_y(Y;t)$, we prove a preliminary lemma on power series. For this, we first need some definitions. Let $R$ be a commutative ring with 1, and consider the ring $R[[t]]$ of formal power series in the variable $t$ with coefficients in $R$. We then define the \emph{Hadamard product} in $R[[t]]$ as the map $\odot :R[[t]]\times R[[t]]\to R[[t]]$ given by
\[
\odot \left(\sum_{i=0}^{\infty}a_it^i,\sum_{j=0}^{\infty}b_jt^j\right)=\sum_{k=0}^{\infty}d_kt^k, \quad \text{where} \quad d_k=a_kb_k.
\]
For $f,g\in R[[t]]$ we will denote $\odot (f,g)$ by $f\odot g$. Given $\lambda_1,\ldots,\lambda_d \in R$, we use the notation $p_i := \lambda_1^i + \cdots \lambda_d^i$ and we let
\[ C= \prod_{i=1}^d (1 - \lambda_i t) = 1 - c_1 t + c_2 t^2 + \cdots +(-1)^dc_d,\]
so that $c_i$ is the $i$th {\em symmetric polynomial}, and $p_i$ is the $i$th {\em power polynomial} of $\lambda_1,\ldots,\lambda_d$.
\begin{lemma}
\label{l55}
Let $G(t) = a_0 + a_1 t + \cdots \in R[[t]]$.  Then
\[ \sum_{i=1}^d G(\lambda_i t) = G\odot (d + p_1 t + p_2 t^2 + \cdots) = d a_0 + G \odot (-tC'/C). \]
\end{lemma}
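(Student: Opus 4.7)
The plan is to prove both equalities directly by expanding the formal power series involved and then invoking the classical identity that relates the power sums $p_k$ to the logarithmic derivative of $C(t)=\prod_{i=1}^d(1-\lambda_i t)$. Everything is a bookkeeping calculation in $R[[t]]$; there is no real geometric content.

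For the first equality, I would swap the order of summation and collect by powers of $t$:
\[
\sum_{i=1}^d G(\lambda_i t)=\sum_{i=1}^d\sum_{k=0}^\infty a_k \lambda_i^k t^k=\sum_{k=0}^\infty a_k\Bigl(\sum_{i=1}^d \lambda_i^k\Bigr)t^k=\sum_{k=0}^\infty a_k p_k t^k,
\]
adopting the standard convention $p_0:=d$ (since $\lambda_i^0=1$). By the definition of the Hadamard product the last expression is exactly $G\odot(d+p_1 t+p_2 t^2+\cdots)$, which establishes the first equality.

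For the second equality, the key step is the identity $-tC'(t)/C(t)=\sum_{k\geq 1}p_k t^k$. Taking the logarithmic derivative of $C(t)=\prod_{i=1}^d(1-\lambda_i t)$ gives
\[
\frac{C'(t)}{C(t)}=\sum_{i=1}^d\frac{-\lambda_i}{1-\lambda_i t},\qquad\text{so}\qquad -t\frac{C'(t)}{C(t)}=\sum_{i=1}^d\frac{\lambda_i t}{1-\lambda_i t}=\sum_{i=1}^d\sum_{k\geq 1}(\lambda_i t)^k=\sum_{k\geq 1}p_k t^k.
\]
Now $R$-linearity of $\odot$ in its second argument, together with the obvious identity $G\odot(d)=d a_0$ for the constant series $d$, yields
\[
G\odot(d+p_1 t+p_2 t^2+\cdots)=G\odot d+G\odot\bigl(-tC'/C\bigr)=d a_0+G\odot\bigl(-tC'/C\bigr),
\]
which is the second claimed equality.

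The only subtlety worth flagging is the role of the constant term: the series $d+p_1 t+p_2 t^2+\cdots$ and $-tC'/C$ differ precisely in their constant terms (by $d$), and the $da_0$ correction on the right absorbs exactly this discrepancy via the Hadamard product. Beyond this sign/constant bookkeeping, no further obstacle arises.
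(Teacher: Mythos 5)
Your proof is correct and follows essentially the same route as the paper: the first equality by expanding and collecting powers of $t$ (with $p_0=d$), and the second via the logarithmic derivative of $C(t)=\prod_i(1-\lambda_i t)$ together with the geometric series, plus the constant-term bookkeeping. You have simply written out in full the details the paper leaves implicit.
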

\begin{proof}
The first equality follows from the definition of the $p_i$. For the second, note that $-tC'/C$ is well defined because the polynomial $C$
has a constant term of 1.  The equation $-tC'/C = p_1 t + p_2 t^2 + \cdots$ is obvious for $d=1$ (geometric series). For $d>1$, recall that logarithmic
derivatives turn products into sums: $(CD)'/(CD) = C'/C + D'/D$.
\end{proof}

We now prove
\begin{theorem}\label{T77}
Let $\varphi:Y\to B$ be a Weierstrass fibration as in Table~1, and let $R(t)=\ln(Q(t))$, where $Q(t)$ is the characteristic power series of the normalized Hirzebruch class given by \eqref{cps}. Then the generating function $\chi_y^{\text{str}}(Y;t)$ is then given by
\[
\chi_y^{\text{str}}(Y;t)=\mathcal{Q}_Y(t)\exp\left(R(t)\odot \left(\frac{-tC'(t)}{C(t)}\right)\right),
\]
where $\mathcal{Q}_Y(t)$ is as given in equation \eqref{gscoeff}, and $C(t)=\sum_{i=0}^{\infty}(-1)^ic_it^i$, where the $c_i$ are formal variables representing the Chern classes of an arbitrary base $B$.
\end{theorem}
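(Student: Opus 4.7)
The plan is to convert the product $\prod_{i=1}^{d}Q(t\lambda_i)$ appearing in \eqref{gscoeff} into a single exponential via the Hadamard-product identity of Lemma~\ref{l55}; once this is done, the claimed generating function drops out directly from \eqref{gscoeff}. The only work beyond applying the lemma is a routine ``formal variable'' reinterpretation needed to package the answer for all $d$ uniformly.

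First I would record that the normalization $Q(0)=1$ (built into \eqref{npsd}) makes $R(t):=\ln Q(t)$ a well-defined element of $\mathbb{Q}[y][[t]]$ with $R(0)=0$. Taking logs and applying Lemma~\ref{l55} with $G=R$, the constant term $d\cdot a_{0}=d\cdot R(0)$ vanishes, yielding
\[
\sum_{i=1}^{d}R(t\lambda_i)=R(t)\odot\left(\frac{-tC'(t)}{C(t)}\right),
\]
where here $C(t)=\prod_{i=1}^{d}(1-\lambda_i t)=1-c_1 t+c_2 t^2-\cdots+(-1)^{d}c_d t^{d}$ and $c_i$ is the $i$th Chern class of $B$. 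Exponentiating and substituting into \eqref{gscoeff} gives
\[
[t^d]\chi_y^{\text{str}}(Y;t)=[t^d]\!\left(\mathcal{Q}_Y(t)\exp\!\left(R(t)\odot\frac{-tC'(t)}{C(t)}\right)\right),
\]
which is the $[t^d]$-coefficient of the formula in the theorem, provided we can reinterpret $C(t)$ as a formal power series in the $c_i$.

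The last step is exactly this reinterpretation. By direct expansion, each coefficient $[t^{k}]\bigl(-tC'(t)/C(t)\bigr)$ is a universal polynomial in $c_1,\ldots,c_k$ alone; hence so is $[t^{k}]$ of $\mathcal{Q}_Y(t)\exp(R(t)\odot(-tC'(t)/C(t)))$. Consequently, replacing the polynomial $C(t)=\prod_{i=1}^{d}(1-\lambda_i t)$ by the formal series $C(t)=\sum_{i\geq 0}(-1)^{i}c_i t^{i}$ with $c_1,c_2,\ldots$ regarded as independent formal variables does not affect $[t^{k}]$ for $k\leq d$; in particular it preserves $[t^{d}]$. This promotes the identity above to one of power series in $t$ with coefficients in $\mathbb{Q}[y][c_1,c_2,\ldots]$, which upon specialization $c_i\mapsto c_i(B)$ (with $c_i(B)=0$ for $i>\dim B$) recovers \eqref{gscoeff} for every base $B$. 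The only non-mechanical step is Lemma~\ref{l55}, which is already proved; the formal-variable passage is bookkeeping and I do not anticipate it as a genuine obstacle.
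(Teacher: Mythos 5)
Your proposal is correct and follows essentially the same route as the paper: apply Lemma~\ref{l55} to $R=\ln Q$ (with vanishing constant term), exponentiate to rewrite $\prod_iQ(t\lambda_i)$, and then observe that replacing the degree-$d$ polynomial $C$ by the formal series does not disturb the coefficient of $t^d$. Your added remark that each coefficient of $-tC'/C$ is a universal polynomial in $c_1,\dots,c_k$ is a slightly more explicit justification of that last step than the paper gives, but the argument is the same.
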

\begin{proof}
By definition, the generating series $\chi_y^{\text{str}}(Y;t)$ is given by
\[
\chi_y^{\text{str}}(Y;t)=\sum_{d=1}^{\infty} \chi_dt^d,
\]
where 
\[
\chi_d=[t^d]\left(\mathcal{Q}_Y(t)\prod_{i=1}^dQ(\lambda_it)\right),
\]
and the $\lambda_i$ are elements in $R$ representing the Chern roots of the tangent bundle of an unspecified base $B$ of dimension $d$. Now let $R(t)=\ln(Q(t))$. Then
\[
\prod_{i=1}^dQ(\lambda_it)=\exp\left(\sum_i^dR(\lambda_it)\right)=\exp\left(R(t)\odot \left(\frac{-tC'}{C}\right)\right),
\]
where the second equality follows from Lemma~\ref{l55} along with the fact that the constant term $a_0$ of $R(t)$ is given by $a_0=\ln(1)=0$. We then have
\begin{equation}\label{afd1}
\chi_d=[t^d]\left(\mathcal{Q}_Y(t)\exp\left(R(t)\odot \left(\frac{-tC'}{C}\right)\right)\right),
\end{equation}
and the only way that the RHS of \eqref{afd1} depends on $d$ is that the polynomial $C=\sum_i^d(-1)^ic_it^i$ is of degree $d$. As such, replacing $C$ in the RHS of \eqref{afd1} by the power series $C(t)=\sum_{i=1}^{\infty}(-1)^ic_it^i$ has no effect, since adding terms of higher degree greater than $d$ will play no role in taking the $d$th coefficient. And moreover, replacing the polynomial $C$ in the RHS of \eqref{afd1} by the power series $C(t)$ removes any dependence on the RHS of \eqref{afd1} on $d$, thus
\[
\chi_y^{\text{str}}(Y;t)=\mathcal{Q}_Y(t)\exp\left(R(t)\odot \left(\frac{-tC'(t)}{C(t)}\right)\right),
\]  
as desired.
\end{proof}
An immediate corollary of Theorem~\ref{T77} is then given by
\begin{corollary}
Let $\varphi:Y\to B$ be any of the Weierstrass fibrations as given un Table~1. Then under the notation and assumptions of Theorem~\ref{T77}, the power series $\chi_{-1}^{\text{str}}(Y;t)$ is a generating series for the stringy Euler characteristic $\chi_{\text{str}}(Y)$ of $Y$, i.e., the coefficient of $t^d$ in $\chi_{-1}^{\text{str}}(Y;t)$ yields a formula for $\chi_{\text{str}}(Y)$ over a base $B$ of dimension $d$.   
\end{corollary}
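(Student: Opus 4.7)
The plan is to derive the corollary as a direct specialization of Theorem~\ref{T77} at $y=-1$, using the fact (recalled in \S\ref{SHC}) that $\chi_{-1}^{\text{str}}(X)$ is by definition the stringy Euler characteristic $\chi_{\text{str}}(X)$. Since Theorem~\ref{T77} asserts that the coefficient of $t^d$ in $\chi_y^{\text{str}}(Y;t)$ equals $\chi_y^{\text{str}}(Y)$ for $Y\to B$ over an arbitrary base of dimension $d$, it suffices to check that the closed-form generating function admits a well-defined specialization at $y=-1$ as a formal power series in $t$ with coefficients in the ring of Chern classes.

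First, I would verify that the characteristic power series $Q(t)$ is regular at $y=-1$. Expanding $1-e^{-t(1+y)} = t(1+y) - t^2(1+y)^2/2 + \cdots$, the ratio $t(1+y)/(1-e^{-t(1+y)})$ tends to $1$ as $y\to -1$, so $Q(t)|_{y=-1} = 1+t$, which is precisely the characteristic power series of the total Chern class (consistent with $T_{-1} = c$). Consequently $R(t)|_{y=-1} = \ln(1+t)$ has zero constant term, and the Hadamard product $R(t)\odot(-tC'(t)/C(t))$ therefore remains a well-defined element of the formal power series ring when evaluated at $y=-1$, and so does its exponential.

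Next, I would address $\mathcal{Q}_Y(t)|_{y=-1}$. Recall that $\mathcal{Q}_Y(t)$ arises from a rational function $\mathcal{Q}_Y(l,s)$ via the substitution $l=e^{(1+y)tL}$, $s=e^{(1+y)tS}$, so setting $y=-1$ collapses $(l,s)$ to $(1,1)$. Regularity of $\mathcal{Q}_Y$ at $(1,1)$ can be established either by direct inspection of the explicit expressions produced by the $\pi_*$ computation (as illustrated for $\mathcal{G}_Y=\text{SU}(2)$ right after \eqref{shcf1}, where the denominators $y+s$ and $l^6+s^2y$ both evaluate to nonzero constants at $y=-1$, $l=s=1$), or more conceptually by invoking Batyrev's theorem that $\chi_{\text{str}}(X)$ is a finite integer for any Gorenstein canonical $X$, applied to a specific base such as $B=\mathbb{P}^d$ to force finiteness of each $t^d$-coefficient, hence of $\mathcal{Q}_Y$ itself.

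The main obstacle is the last point, that is, ensuring that no hidden cancellation fails to resolve a pole at $y=-1$ in any of the $14$ rational expressions $\mathcal{Q}_Y(l,s)$ produced by iterating Lemma~\ref{L2}. A uniform justification via Batyrev's regularity result sidesteps a tedious case-by-case check; with this in hand, the corollary follows immediately by combining $\chi_{-1}^{\text{str}}(Y)=\chi_{\text{str}}(Y)$ with the polynomial identity $[t^d]\chi_y^{\text{str}}(Y;t) = \chi_y^{\text{str}}(Y)$ from Theorem~\ref{T77}.
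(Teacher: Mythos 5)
Your overall route is the same as the paper's: the corollary is stated there as an immediate consequence of Theorem~\ref{T77} together with the fact (from \S\ref{SHC}) that $T_{-1}=c$ and hence $\chi_{-1}^{\text{str}}(X)=\chi_{\text{str}}(X)$; no further proof is given. You are right that the only point needing any care is whether the closed form specializes at $y=-1$, and your computations $Q(t)|_{y=-1}=1+t$ and $R(t)|_{y=-1}=\ln(1+t)$ are correct.

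However, your ``direct inspection'' justification for $\mathcal{Q}_Y$ is factually wrong: at $y=-1$, $l=s=1$ the denominators in the SU(2) expression do \emph{not} evaluate to nonzero constants --- both vanish, since $y+s=-1+1=0$ and $l^6+s^2y=1-1=0$. What actually happens is a matched cancellation of orders in $u=1+y$: writing $l=e^{utL}$, $s=e^{utS}$ one finds $y+s=u(1+tS)+O(u^2)$ and $l^6+s^2y=u(1+6tL-2tS)+O(u^2)$, while the corresponding numerators (which carry explicit factors of $y+1$ or $s-1$) also vanish to first order in $u$, so the limit exists. Rather than checking this in all $14$ cases, the clean uniform argument is the one the paper itself supplies in \S 8: since each $Y$ admits a crepant resolution $\widetilde{Y}$, the coefficient of $t^d$ in $\chi_y^{\text{str}}(Y;t)$ equals $\chi_y(\widetilde{Y})$, a \emph{polynomial} in $y$, so evaluation at $y=-1$ is unambiguous and yields $\chi(\widetilde{Y})=\chi_{\text{str}}(Y)$. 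Your Batyrev-style fallback is gesturing at this, and with it the corollary does follow; just replace the incorrect nonvanishing claim with the polynomiality argument.
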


For an illustration of Theorem~\ref{T77}, consider the case when $\mathcal{G}_Y=\text{SO(6)}$ as in Table~1. In such a case, we have $S=2L$ so that $\mathcal{Q}_Y(l,s)=\mathcal{Q}_Y(l)$ (recall $l=e^{(1+y)L}$ and $s=e^{(1+y)S}$), which is given by
\[
\mathcal{Q}_Y(l)=1-5\,y+{\frac { \left( y+1 \right)  \left(  \left( l+4 \right) y-l \right) }{{l}^{2}+y}}.
\]
We then have
\[
\mathcal{Q}_Y(t)=1-5y+{\frac { \left( y+1 \right)  \left(  \left( e^{t(1+y)L}+4 \right) y-e^{t(1+y)L} \right) }{{e^{2t(1+y)L}}+y}},
\]
so that
\[
\chi_y^{\text{str}}(Y;t)=\left(1-5y+{\frac { \left( y+1 \right)  \left(  \left( e^{t(1+y)L}+4 \right) y-e^{t(1+y)L} \right) }{{e^{2t(1+y)L}}+y}}\right)\exp\left(R(t)\odot \left(\frac{-tC'(t)}{C(t)}\right)\right).
\] 
In particular, in the Calabi-Yau case (i.e., when $L=c_1$) we have
\[
\chi_y^{\text{str}}(Y;t)=a_1t+a_2t^2+a_3t^3+a_4t^4+\cdots,
\]
with
\[
a_1=(y^2-10y+1)c_1, \quad \quad a_2=-6(y-1)c_1^2y,
\]

\[
a_3=-1/12(48(y^2-4y+1)c_1^2y-(y^2-10y+1)^2c_2)c_1,
\]
and
\[
a_4=-1/2(y-1)(y^2-10y+1)(4c_1^3+2c_1c_2-c_3)c_1y,
\]
where we recall $c_i$ denotes the $i$th Chern class of the base $c_i(B)$. It then follows that the stringy Euler characteristic of $Y$ is given by $12c_1$, $-12c_1^2$, $12c_1c_2+24c_1^3$, and $12(c_1c_3-2c_1^2c_2-4c_1^4)$ over a base of dimension 1,2,3 and 4 respectively. 

For all other cases considered in Table~1, the coefficients of $\chi_y^{\text{str}}(Y;t)$ (for $L=c_1$) up to degree 6 are listed at www.math.fsu.edu/$\sim$hoeij/files/SHC/CalabiYau.
 
\section{Stringy Hirzebruch-Riemann-Roch for SU(2) fibrations}
Let $\varphi:Y\to B$ be the SU(2) fibration as given in Table~1. We then have
\[
\mathcal{Q}_Y(l,s) =  1-2y+\frac{y+1}{y+s} \left(y+sl\frac{y^2s-(l^4+(s-1)l^2-s)y-l^4}{l^6+s^2y} \right),
\]
and the generating series $\chi_y^{\text{str}}(Y;t)$ in the Calabi-Yau case (i.e., when $L=c_1$) is given by
\[
\chi_y^{\text{str}}(Y;t)=a_1t+a_2t^2+a_3t^3+\cdots,
\]
where
\[
a_1=(y^2-10y+1)c_1, \quad a_2=3y(1-y)(S^2-5Sc_1+10c_1^2),
\]
and
\[
a_3=(y^2-4y+1)(S^3y-14S^2c_1y+49Sc_1^2y-60c_1^3y)+1/12(y^2-10y+1)^2c_1c_2.
\]

We now derive relations for the stringy Hodge numbers $h_{\text{str}}^{p,q}(Y)$ in the case that  $Y$ is a Calabi-Yau 4-fold (so that the base is a 3-fold). In such a case, it can be shown that $c_1c_2=24$ (for example, by Theorem~A.2 of \cite{EFY}), so that $a_3$ simplifies to 
\[
a_3=(y^2-4y+1)(S^3y-14S^2c_1y+49Sc_1^2y-60c_1^3y)+2(y^2-10y+1).
\]
Writing $a_3$ as a polynomial in $Y$ we then have
\begin{equation}\label{scyg}
a_3=\chi(y)=2+\chi_1y+\chi_2y^2+\chi_3y^3+2y^4,
\end{equation}
where
\[
\chi_1=\chi_3=S^3+14S^2c_1+49Sc_1^2-60c_1^3 \quad\quad  \text{and} \quad\quad \chi_2=4\chi_1+204.
\]
The stringy Euler characteristic of $Y$ is then given by
\[
\chi_{\text{str}}(Y)=\chi(-1)=288+360c_1^3+6(14S^2c_1-S^3-49Sc_1^2).
\]

Now let $\rho:\widetilde{Y}\to Y$ be the crepant resolution of $Y$ given by the data in Table~2 (in fact, \emph{any} crepant resolution will suffice for our purposes), and recall that the $\chi_y$-genus of $\widetilde{Y}$ is given by
\[
\chi_y(\widetilde{Y})=\int_{\widetilde{Y}}T_y(\widetilde{Y}).
\]
Now since 
\[
\int_{\widetilde{Y}}T_y(\widetilde{Y})=\int_Y \varphi_*(T_y(\widetilde{Y}))=\chi_y^{\text{str}}(Y),
\]
we have
\begin{equation}\label{shrr}
\chi_y(\widetilde{Y})=\chi(y),
\end{equation}
where $\chi(y)=\chi_y^{\text{str}}(Y)$ is the stringy $\chi_y$-genus of $Y$ given by equation \eqref{scyg}. From the definition of stringy Hodge numbers we then have $h_{\text{str}}^{p,q}(Y)=h^{p,q}(\widetilde{Y})$, thus by equations \eqref{shrr} and \eqref{hnr} the coefficients $\chi_i$ of $\chi(y)$ yield linear relations among the stringy Hodge numbers of $Y$. In particular, we have $\chi_0=2$, reflecting the fact that $h_{\text{str}}^{p,0}(Y)=0$ for $0<p<4$ and $h_{\text{str}}^{0,0}(Y)=h_{\text{str}}^{4,0}(Y)=1$ (which follows from the fact that $Y$ is Calabi-Yau). From the coefficients $\chi_1$ and $\chi_2$ we have the relations
\[
\chi_1=h_{\text{str}}^{1,0}-h_{\text{str}}^{1,1}+h_{\text{str}}^{1,2}-h_{\text{str}}^{1,3}, 
\]
and
\[
\chi_2=h_{\text{str}}^{2,0}-h_{\text{str}}^{2,1}+h_{\text{str}}^{2,2}-h_{\text{str}}^{2,3}=h_{\text{str}}^{2,2}-2h_{\text{str}}^{1,2},
\]
which yields
\[
h_{\text{str}}^{1,3}=h_{\text{str}}^{1,2}-(S^3+14S^2c_1+49Sc_1^2-60c_1^3)-h_{\text{str}}^{1,1},
\]
and 
\[
h_{\text{str}}^{2,2}=2h_{\text{str}}^{1,2}+4(S^3+14S^2c_1+49Sc_1^2-60c_1^3)+204,
\]
where $h_{\text{str}}^{p,q}$ denotes $h_{\text{str}}^{p,q}(Y)$.

\begin{remark}
By the Shioda-Tate Wazir formula one can show that if $\psi:X\to B$ is a Calabi-Yau elliptic fibration then
\[
h^{1,1}(X)=h^{1,1}(B)+1+\text{rk}(\text{MW}(\psi))+\Gamma_{\psi},
\]
where $\text{rk}(\text{MW}(\psi))$ is the rank of the Mordell-Weil group of rational sections of $\psi:X\to B$ and $\Gamma_{\psi}$ is the number of irreducible fibral divisors not meeting the distinguished section of $\psi:X\to B$. For $\psi:X\to B$ any of the crepant resolutions of Weierstrass fibrations considered in this paper we have $\text{rk}(\text{MW}(\psi))=0$ and $\Gamma_{\psi}=n$, where $n$ is the number of blowups needed for the crepant resolution. For all Weierstrass fibrations $\varphi:Y\to B$ considered in this paper we then have
\[
h_{\text{str}}^{1,1}(Y)=h^{1,1}(B)+1+n,
\] 
where $n$ is the number of blowups needed for the crepant resolution of $Y$. As such, in the case of 4-folds once one of $h_{\text{str}}^{1,2}$, $h_{\text{str}}^{1,3}$ or $h_{\text{str}}^{2,2}$, is computed, the two remaining non-trivial stringy Hodge numbers may be determined by $\chi_y^{\text{str}}(Y)$ via stringy Hirzebruch-Riemann-Roch. 
\end{remark}
\section{Summary of results}
Given a Weierstrass fibration $\varphi:Y\to B$ with gauge group $\mathcal{G}_Y$ as in Table ~1, we computed $\varphi_*T_y(Y)$, which in each case is given by
\begin{equation}\label{Q}
\varphi_*T_y^{\text{str}}(Y)=\mathcal{Q}_Y(l,s)T_y(B),
\end{equation}
so that all the information coming from $Y$ is contained in the term $\mathcal{Q}_Y(l,s)$, which is a rational expression in $l=e^{(1+y)L}$ and $s=e^{(1+y)S}$, where $L$ is the first Chern class of the fundamental line bundle of $Y$ and $S$ is the divisor in $B$ over which the singularities of $Y$ are supported.\footnote{All computer calculations in this paper were performed with Maple, and all files relevant to our computations can be found at www.math.fsu.edu/$\sim$hoeij/files/SHC} From formula \eqref{Q} one can then give a formula for the stringy $\chi_y$-genus of $Y$ in terms of invariants of the base $B$, namely
\[
\chi_y^{\text{str}}(Y)=\int_B \mathcal{Q}_Y(l,s)T_y(B).
\] 

In particular, if the base $B$ is of dimension $d$, then 
\[
T_y(B)=\prod_{i=1}^d Q(\lambda_i),
\]
where the $\lambda_i$ are the Chern roots of the tangent bundle of $B$, and $Q(z)$ is the characteristic power series of the characteristic class $T_y$ (see equation \eqref{cps}). It then follows that the stringy $\chi_y$-genus is given by
\begin{equation}\label{if17}
\chi_y^{\text{str}}(Y)=[t^d]\left(\mathcal{Q}_Y\left(l',s'\right)\prod_{i=1}^d Q(t\lambda_i)\right),
\end{equation}
where $l'=e^{(1+y)tL}$, $s'=e^{(1+y)tS}$, and we recall $[t^d]$ picks out the $d$th coefficient of a formal power series in $t$. From a relative perspective, that is, from a perspective independent of the dimension of the base, we then derived a generating series $\chi_y^{\text{str}}(Y;t)$, where the coefficient of $t^d$ is precisely given by the RHS of \eqref{if17}. The generating series $\chi_y^{\text{str}}(Y;t)$ is then given by
\[
\chi_y^{\text{str}}(Y;t)=\mathcal{Q}_Y(t)\exp\left(R(t)\odot \left(\frac{-tC'(t)}{C(t)}\right)\right),
\]
where $\mathcal{Q}_Y(t)=\mathcal{Q}_Y\left(l',s'\right)$, $R(t)=\ln(Q(t))$, and $C(t)=\sum_i (-1)^ic_it^i$, so that the $c_i$s appearing in the coefficient of $t^d$ in $\chi_y^{\text{str}}(Y;t)$ represent the Chern classes of a base $B$ of dimension $d$. Since all $Y$ we consider admit crepant resolutions, the coefficient of $t^d$ in $\chi_y^{\text{str}}(Y;t)$ is in fact a polynomial in $y$ of the form 
\[
\chi(y)=\chi_0+\chi_1y+\chi_2y^2+\cdots+\chi_{d+1}y^{d+1},
\]
which is in fact (near) palindromic, so that $(-y)^{d+1}\chi(1/y)=\chi(y)$. The stringy Euler characteristic of $Y$ over a base of dimension $d$ is then given by $\chi_{\text{str}}(Y)=\chi(-1)$, and moreover, the coefficient $\chi_p$ in $\chi(y)$ yields linear relations among the stringy Hodge numbers $h_{\text{str}}^{p,q}(Y)$ for $Y$ over a base of dimension $d$, namely
\[
\chi_p=\sum_{q}(-1)^qh_{\text{str}}^{p,q}(Y).
\]

We conclude by listing $\mathcal{Q}_Y(l,s)$ for each $Y$ as listed in Table~1. 
\\
For $\mathcal{G}_Y=\text{SU}(2)$,

\[
\mathcal{Q}_Y(l,s) =
1 - 2y + \frac{y+1}{y+s} \left( y + sl\frac{(y+1)(s  y  -l^4)-y(s-1)l^2             }{ l^6+s^2y}\right).
\]
\\

For $\mathcal{G}_Y=\text{SU}(3), \text{USp}(4)$ and $G_2$,  

\[
\mathcal{Q}_{Y}(l,s) =
1 - 3y + \frac{y+1}{y+s} \left(2y + sl\frac{(y+1)(s^2y  -l^4)-y(s-1)l(l^2+s)        }{ l^6+s^3y}\right).
\] 
\\

For $\mathcal{G}_Y=\text{SU}(4), \text{Spin}(7)$,

\[ \mathcal{Q}_{Y}(l,s)=
1 - 4y + \frac{y+1}{y+s} \left(3y + sl\frac{(y+1)(s^5y^2-l^8)-y(s-1)l(s+l)(l^5+s^3y)}{(l^6+s^4y)(l^4+s^2y)}\right).
\]
\\

For $\mathcal{G}_Y=\text{SU}(5)$, $\mathcal{Q}_{Y}(l,s)=$

\[ 
1 - 6y + \frac{y+1}{y+s} \left(5y + \frac{y(s-1)(l^7 +s^5y) + sl(y+1)(s^4y^2-l^5-sly(l-1)(l^2+sl+s^2))}{(l+y)(l^6+s^5y)}\right).
\]
\\

For the SO(3), SO(5) and SO(6) cases we have $S=4L,2L$ and $2L$ respectively, so that $\mathcal{Q}_{Y}(l,s)=\mathcal{Q}_{Y}(l)$. As such, for $\mathcal{G}_Y=\text{SO}(3)$,

\[\mathcal{Q}_{Y}(l)=
1 - 2y + (y+1)  \frac{(1+l)y-l^3}{l^4+y},
\] 
for $\mathcal{G}_Y=\text{SO}(5)$,
\[\mathcal{Q}_{Y}(l)=
1 - 3y + (y+1)  \frac{(2-l)y-l  }{l^2+y} + \frac{2yl(y+1)^2}{(l^2+y)^2},
\] 
and for $\mathcal{G}_Y=\text{SO}(6)$,
\[\mathcal{Q}_{Y}(l)=
1 - 5y + (y+1)  \frac{(4+l)y-l  }{l^2+y}.
\] 

For $\mathcal{G}_Y=F_4$,

\[
\mathcal{Q}_{Y}(l,s)=
1 - 5y + \frac{y+1}{y+s} \left(4y + sl\frac{(y+1)(s^3y  -l^4)-2y(s-1)l^2s           }{ l^6+s^4y}\right).
\]
\\

For $\mathcal{G}_Y=E_6$,

\[
\mathcal{Q}_{Y}(l,s)=
1 - 8y + \frac{y+1}{y+s} \left(7y + \frac{y(s-1)(l^9 +s^7y) + sl(y+1)(s^3y  -l^4)(l^3+s^3y)}{      (l^3+s^2y)(l^6+s^5y)}\right).
\]
\\

For $\mathcal{G}_Y=E_7$,

\[\mathcal{Q}_{Y}(l,s)=
1 - 9y + \frac{y+1}{y+s} \left(8y + \frac{y(s-1)(l^{10}+s^8y) + sl(y+1)(s^7y^2-l^8)          }{      (l^4+s^3y)(l^6+s^5y)}\right).
\]
\\
And finally, for $\mathcal{G}_Y=E_8$,

\[ \mathcal{Q}_{Y}(l,s)=
1 -11y + \frac{y+1}{y+s} \left(10y+ \frac{y(s-1)(l^6 +s^5 ) + sl(y+1)(s^4y  -l^4)          }{                 l^6+s^5y }\right).
\]

\bibliographystyle{plain}



\end{document}